\documentclass[11pt,a4paper]{article}
\usepackage[T1]{fontenc}
\usepackage{lmodern,amsmath,amssymb,amsthm,mathtools,microtype}
\usepackage[margin=27mm]{geometry}
\usepackage[colorlinks=true,linkcolor=blue,citecolor=blue,urlcolor=blue]{hyperref}
\newtheorem{theorem}{Theorem}[section]
\newtheorem{lemma}[theorem]{Lemma}
\newtheorem{proposition}[theorem]{Proposition}

\theoremstyle{remark}
\newcommand{\E}{\mathbb E}\newcommand{\PP}{\mathbb P}
\newcommand{\R}{\mathbb R}\newcommand{\ind}{\mathbf1}
\newcommand{\ip}[2]{\langle #1,#2\rangle}
\newcommand{\RD}{\operatorname{RD}}\newcommand{\KL}{\operatorname{KL}}
\newcommand{\Law}{\mathcal L}\newcommand{\clip}{\operatorname{clip}}
\newcommand{\cx}{\preceq_{\rm cx}}\newcommand{\st}{\preceq_{\rm st}}
\newcommand{\RR}{\mathcal R}\newcommand{\eps}{\varepsilon}
\title{An invariant variational proof for canonical processes\\with independent symmetric log-concave tails}
\author{{Witold Bednorz, Rafal Martynek and Rafal Meller}
\footnote{{\bf Subject classification:} 60G15, 60G17}
\footnote{{\bf Keywords and phrases:} Canonical Processes,  Invariant Method}
\footnote{Research partially supported by  Grant UMO-2022/47/B/ST1/02114}
\footnote{Institute of Mathematics, University of Warsaw, Banacha 2, 02-097 Warszawa, Poland}}
\date{}
\begin{document}
\maketitle
\begin{abstract}
We prove a prescribed-index-law comparison for canonical processes with independent symmetric coordinates having log-concave tails, with no doubling assumption. The proof starts with an invariant variational argument for capped exponentials, lifts the orbit estimate by type-class replication, and proves the reverse bound by clipped codes. We then give the tail-slope expansion and prove both its cost comparison and its random-variable comparison, including the countable-expansion limits. Thus the general-tail step is proved here rather than left as a transfer assertion. The classical exponential chaining theorem and standard facts of convex analysis, log-concavity and finite-dimensional transport remain explicit external inputs. This manuscript merges the two supplied drafts; the general-tail transfer arguments are attributed to Hu--Wang--Wu. No new comparison theorem is claimed.
\end{abstract}
\tableofcontents
\clearpage

\section{The theorem and the structure of the proof}
Let $Y=(Y_1,\ldots,Y_d)$ have independent, symmetric, nonzero coordinates. Assume
\[
N_i(u)=-\log\PP(|Y_i|\ge u),\qquad u\ge0,
\]
are closed, convex and nondecreasing, with $N_i(0)=0$ and extended values allowed. Put
$q_i(p)=\sup\{u\ge0:N_i(u)\le p\}$ and normalize $q_i(1)=1$. Coordinate rescaling is harmless: replace $Y_i$ by $Y_i/q_i(1)$ and every index coordinate $x_i$ by $q_i(1)x_i$. Zero coordinates, if any, can simply be deleted.
The inverse $q_i$ is nonnegative, nondecreasing and concave, and
\begin{equation}\label{quantile}
|Y_i|\overset{d}=q_i(E),\qquad q_i(ap)\le a q_i(p)\quad(a\ge1),
\end{equation}
where $E$ is a mean-one exponential. These follow by inversion of the convex tail potential, including an atom at the upper endpoint.
Let $b_i=\lim_{p\to\infty}q_i(p)\in[1,\infty]$. If $b_i>1$, define
\begin{equation}\label{potential}
V_i(u)=\begin{cases}
u^2,&|u|\le1,\\
2N_i(|u|)-1,&1<|u|\le b_i,\\
+\infty,&|u|>b_i.
\end{cases}
\end{equation}
If $b_i=1$, take $V_i(u)=u^2$ on $[-1,1]$ and infinity outside. These potentials are even and convex: when $b_i>1$, $N_i(1)=1$ and $N'_{i,+}(1)\ge1$. For $0<s<b_i$ put $\sigma_i(s)=V'_{i,+}(s)$ and
\begin{equation}\label{intrinsic}
\Psi_i(r)=\int_0^{b_i}\sigma_i(s)\ind_{\{\sigma_i(s)<r\}}\,ds,\qquad r\ge0.
\end{equation}
The strict inequality fixes the values at slope thresholds. In particular $\Psi_i(r)=r^2/4$ for $0\le r\le2$, and $\Psi_i(r)\ge\chi(r/2)$, where $\chi(r)=\min(1,r^2)$.

For a centered integrable source $A$ and a finitely supported probability law $\mu$ on $\R^d$, set
\begin{equation}\label{pairing}
B_A(\mu)=\sup_{X\sim\mu}\E\ip{X}{A},\qquad
S_A(T)=\E\max_{x\in T}\ip{x}{A}.
\end{equation}
The supremum in $B_A$ is over all couplings with the prescribed source law. For finite $T$,
\begin{equation}\label{supmu}
S_A(T)=\sup_{\mu\in\mathcal P(T)}B_A(\mu),
\end{equation}
since a measurable maximizing index gives the reverse of the immediate inequality.
For any nondecreasing costs $d_i:[0,\infty)\to[0,\infty]$ with $d_i(0)=0$, write
\begin{align}
D_t^d(x,y)&=\sum_i d_i(|x_i-y_i|/t),\nonumber\\
\RD_\mu^d(t)&=\inf_{X,X'\sim\mu}\{I(X;X')+\E D_t^d(X,X')\},\label{RD}\\
\RR_d(\mu)&=\int_0^\infty\RD_\mu^d(t)\,dt.\nonumber
\end{align}
Both marginals are prescribed. All logarithms are natural.

\begin{theorem}\label{main}
There is a universal $C$ such that
\[
C^{-1}\RR_\Psi(\mu)\le B_Y(\mu)\le C\RR_\Psi(\mu)
\]
for every finitely supported $\mu$. Consequently,
$S_Y(T)\asymp\sup_{\mu\in\mathcal P(T)}\RR_\Psi(\mu)$ for every finite $T$.
In particular this applies to independent symmetric coordinates with log-concave densities.
\end{theorem}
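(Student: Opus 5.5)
The plan has three stages: a base case for capped exponentials, a transfer to general log-concave tails through the tail-slope expansion, and a final deduction of the set version from \eqref{supmu}. The last stage is immediate. Once $B_Y(\mu)\asymp\RR_\Psi(\mu)$ holds uniformly in $\mu$, taking the supremum over $\mu\in\mathcal P(T)$ and using \eqref{supmu} gives $S_Y(T)\asymp\sup_{\mu}\RR_\Psi(\mu)$. The log-concave density case is also immediate, because then $N_i$ is convex and the hypotheses hold. So everything rests on the two-sided comparison for a fixed finitely supported $\mu$.

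\emph{Step 1: capped exponentials.} Take coordinates $Z_i=\clip(E_i\varepsilon_i)$ with symmetric signs, capped at level $b_i$. For these, $V_i$ is quadratic up to $1$ and then linear up to $b_i$, and $\Psi_i$ is, up to constants, the cost $\chi$-type function $\min(r^2,r)$ truncated at the cap.

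For the upper bound $B_Z(\mu)\lesssim\RR_\Psi(\mu)$, I would use the invariant variational argument. Fix a coupling $X\sim\mu$ with $Z$. Write $\E\ip XZ$ as a limit of entropy-penalized problems via Donsker--Varadhan duality: $\E\ip XZ\le t\,I(X;Z)+t\log\E e^{\ip{x}{Z}/t}$ after a suitable change of measure. Then chain over dyadic $t$, letting the quantizations $X'$ range over couplings with the same marginal. The orbit estimate, meaning invariance under coordinatewise measure-preserving maps of $Z$, controls each level by $I(X;X')+\E D^\Psi_t(X,X')$. Lifting from a single $\mu$ to the permutation-symmetric setting uses type-class replication: take $n$ i.i.d.\ copies, so that mutual information becomes additive and $B$ tensorizes. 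The classical exponential chaining theorem (Talagrand's theorem for exponential canonical processes) serves as the external input that closes the chain.

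For the lower bound, I would construct clipped codes. Given the optimal rate-distortion coupling at each scale $t$, I build a random code of size about $e^{I}$, clip its coordinates at $b_i t$, and use independence of the $Z_i$ to show that the maximizing codeword gains $\gtrsim t\cdot\RD_\mu(t)$. Summing over dyadic $t$ then gives $\RR_\Psi(\mu)\lesssim B_Z(\mu)$.

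\emph{Step 2: general tails.} By \eqref{quantile}, $|Y_i|=q_i(E)$ with $q_i$ concave. I would expand $q_i$ through its slopes as a mixture of capped linear functions,
\[
q_i(p)=\int \min(p,s)\,\nu_i(ds),
\]
which is the tail-slope expansion. This realizes $Y$ as a positive superposition of capped exponentials that share the same $E_i$. For the random variables, $B_Y$ is then comparable to the corresponding superposition of $B_{Z^{(s)}}$. The upper direction follows from linearity and the lower direction from comonotonicity, with the countable expansions handled by monotone limits. For the costs, the slope integral \eqref{intrinsic} defining $\Psi_i$ matches the superposition of the capped-exponential costs, up to universal constants, precisely because of the strict-inequality convention at the slope thresholds.

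\emph{Main obstacle.} I expect the hardest point to be that $\RR$ is not linear in the cost. An infimum over couplings of a sum of costs only dominates the sum of infima in one direction. The cost comparison therefore needs a single coupling that is near-optimal for every slope level at once. My first attempt would use the scale variable $t$ itself to synchronize the levels: slope $s$ at scale $t$ corresponds to the capped cost at scale $t/s$. I would then check that, after integrating in $t$, the multiscale sum reorganizes into $\int\RD^\Psi_\mu(t)\,dt$ with only constant loss.
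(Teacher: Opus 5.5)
Your Step~1 puts the two tools in the wrong directions, and the direction you give to random codes does not close.

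The Donsker--Varadhan chaining you describe for $B\lesssim\RR$ is in substance the paper's clipped-code argument (Section~\ref{upper}), and that is indeed how the upper bound is proved. It needs two things you leave out. First, the increments between rate--distortion observations at consecutive dyadic scales must be clipped at $\tfrac32 r_k$, because $\log\E e^{\ip{z}{Z}}$ is finite only for $\|z\|_\infty<1$. Second, the discarded parts are paid for by $\E\|a(X)\|_{\beta,1}$, through $B_{\xi^\beta}(\mu)\le\E\|a(X)\|_{\beta,1}+B_Z(v_\#\mu)$; this is what the jump $2\beta_i-1$ of the capped cost buys. No orbit invariance, replication or chaining theorem is needed there.

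The genuine gap is the direction $\RR\lesssim B$. A random code at scale $t$ whose best codeword gains $\gtrsim t\RD_\mu(t)$ is a single-scale, Sudakov-type bound. The best codeword changes with $t$, so these gains cannot be summed, and you only get $\sup_t t\RD_\mu(t)$ rather than $\int_0^\infty\RD_\mu(t)\,dt$. Bridging that is exactly a majorizing-measure lower bound. The paper obtains it as follows:
\begin{itemize}
\item it imports the hard direction of the exponential chaining theorem \eqref{chain} (Lemma~\ref{uncapped});
\item it reduces capped sources to this on group orbits by the Gibbs variational argument (Proposition~\ref{invariantbound});
\item it computes $\RR_\beta$ of the uniform orbit law exactly by \eqref{haaridentity};
\item it reaches arbitrary $\mu$ through uniform laws on type classes, which are permutation orbits (Proposition~\ref{types}).
\end{itemize}
Also, the capped cost is $\psi_\beta$, quadratic up to $2$ and then constant $2\beta-1$. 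It is not comparable to $\min(r^2,r)$; the $\ell_1$ behaviour appears only after integrating in $t$.

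Step~2 has a second gap. In $|Y_i|=\int\min(E_i,s)\,\nu_i(ds)$ all pieces share $E_i$, so the expanded vector does not have independent coordinates and the capped theorem does not apply to it. Replacing $E_i$ by independent copies changes the law; for instance, finely discretizing a diffuse finite $\nu_i$ into independent symmetric pieces makes the sum concentrate.

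Linearity does give $B_Y(\mu)\le\sum_s B_{A_s}(\mu)$ for the pieces $A_s$. To go further, though, you must compare a sum of $\RR$'s for separate costs with the $\RR$ of the summed cost, where information is paid once per level in one case and once overall in the other. That is the obstacle you flag and leave open. The strict-inequality convention only fixes values at slope thresholds and does not resolve it.

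The paper sidesteps this. It bins the slopes of $V_i$ dyadically and uses \emph{independent} capped pieces $H_i=\Xi_{i,*}+\sum_j 2^{-j}\Xi_{ij}$ with caps $\beta_{ij}=(w_{ij}+1)/2$, written as $H=L^{\mathsf T}\Xi$ with $L$ injective. A single coupling of $\mu$ then induces the summed cost $\Phi$ by pullback, so the capped theorem in the expanded space gives $\RR_\Phi(\mu)\asymp B_H(\mu)$ with no synchronization across levels. Two further ingredients, both absent from your plan, are still required:
\begin{itemize}
\item the pointwise comparison of $\Psi$ and $\Phi$ up to the error $E_0$, which is absorbed through the Bernoulli comparison $B_\eps\asymp\RR_\chi$ (Proposition~\ref{costprop});
\item the source comparison $C^{-1}Y_i\cx H_i\cx CY_i$, which is not automatic for an independent sum. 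It is proved by moment and tail estimates from the slope--area identity, a Chernoff bound and Paley--Zygmund (Proposition~\ref{sourceprop}, Lemma~\ref{momentcx}).
\end{itemize}
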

For the last assertion, the survival function of $|Y_i|$ on $[0,\infty)$ is log-concave by Pr\'ekopa's theorem. Symmetric signs are also permitted by the tail formulation, although they do not have densities.

The invariant capped-exponential argument comes from the supplied draft \cite{invariant}; the second supplied note \cite{extension} explains the final reduction but invokes the transfer results of \cite{HWW}. Sections~\ref{area}--\ref{finish} below include proofs of those transfer results, following \cite{HWW}, Section 4. We use as standard inputs Pr\'ekopa's theorem \cite{Prekopa}, the exponential chaining theorem \cite{LT}, Birkhoff's finite assignment theorem \cite{Birkhoff}, the martingale characterization of convex order \cite{Strassen}, and elementary finite-dimensional convex duality. We do not invoke the uniform convex-order truncation theorem of \cite{HWW}.

\section{Couplings, information, and limiting operations}\label{prelim}
We collect the facts needed to keep both index marginals fixed. If an observation $O$ is generated from $X\sim\mu$, generate $X'$ independently from the posterior law of $X$ given $O$. Then
\begin{equation}\label{posterior}
X'\sim\mu,\quad (X',O)\overset d=(X,O),\quad I(X;X')\le I(X;O).
\end{equation}
This is data processing. For observations $U,V$ conditionally independent given $X$,
$I(X;U,V)\le I(X;U)+I(X;V)$, by the entropy chain rule.
We also use the relative-entropy inequality
\begin{equation}\label{entropy}
\E_P f\le\KL(P\Vert Q)+\log\E_Q e^f.
\end{equation}
It follows by comparing $P$ with the exponential tilt of $Q$.

\begin{lemma}\label{calculus}
The costs in \eqref{RD} obey the following rules:
\begin{align}
\RR_{d(a\cdot)}(\mu)&=a\RR_d(\mu),\quad a>0,\label{scale}\\
\RR_d(\mu)&\le\RR_{cd}(\mu)\le c\RR_d(\mu),\quad c\ge1,\nonumber\\
\RR_{d+e}(\mu)&\le4\{\RR_d(\mu)+\RR_e(\mu)\},\label{costsum}\\
\RR_d(\mu)&\le8\{\RR_d(a_\#\mu)+\RR_d(v_\#\mu)\}
\quad\text{if }x=a(x)+v(x).\label{decomp}
\end{align}
Injective pullback of the index and distortion leaves the functional unchanged.
\end{lemma}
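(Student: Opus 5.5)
All five rules come from two moves. Rescaling time handles \eqref{scale} and the constant-factor rule. The posterior construction \eqref{posterior}, with two observations combined through the chain-rule bound $I(X;U,V)\le I(X;U)+I(X;V)$, handles \eqref{costsum} and \eqref{decomp}. Throughout I will work with $\eps$-optimal couplings at a fixed $t$, prove a pointwise inequality between the functions $\RD(t)$, and then integrate in $t$. Since $\mu$ is finitely supported, all posteriors are regular conditional laws on a finite set, so there are no measurability issues.

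\emph{Scaling and constants.} Since $D_t^{d(a\cdot)}(x,y)=D_{t/a}^d(x,y)$, we get $\RD_\mu^{d(a\cdot)}(t)=\RD_\mu^d(t/a)$. The substitution $t=as$ in the integral then gives \eqref{scale}. For $c\ge1$ we have $cd\ge d$ pointwise, so $\RD^d\le\RD^{cd}$. Conversely, for any coupling, $I+c\,\E D^d\le c\,(I+\E D^d)$; taking the infimum gives $\RD^{cd}\le c\,\RD^d$, and integration finishes the rule. For pullback, an injective map $\phi$ of the index set puts couplings of $\mu$ and of $\phi_\#\mu$ in bijection. Mutual information is invariant under injective relabelling, and the distortion is transported by definition, so each $\RD(t)$ is unchanged.

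\emph{Sum of costs.} Fix $t$. Let $X'$ be a kernel observation of $X$ that is $\eps$-optimal for $\RD^d_\mu(t)$, and let $X''$ be one that is $\eps$-optimal for $\RD^e_\mu(t)$, with the two conditionally independent given $X$. Let $\hat X$ be drawn from the posterior of $X$ given $(X',X'')$. By \eqref{posterior} and the chain rule, $I(X;\hat X)\le I(X;X')+I(X;X'')$. Since $(\hat X,X')\overset d=(X,X')$, each coordinate satisfies
\[
|X_i-\hat X_i|\le2\max\bigl(|X_i-X'_i|,\ |\hat X_i-X'_i|\bigr).
\]
Monotonicity of $d_i$ then gives $\E D^d_{2t}(X,\hat X)\le2\,\E D^d_t(X,X')$, and the same argument gives the analogous bound for $e$ via $X''$. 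Hence $\RD^{d+e}_\mu(2t)\le2\{\RD^d_\mu(t)+\RD^e_\mu(t)\}+O(\eps)$. Integrating, $\int_0^\infty\RD(s)\,ds=2\int_0^\infty\RD(2t)\,dt$, which yields the constant $4$ in \eqref{costsum}.

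\emph{Decomposition.} This is the step needing the most care. Put $A=a(X)$ and $V=v(X)$. Take $A'$ to be an $\eps$-optimal observation for $\RD^d_{a_\#\mu}(t)$ generated from $A$, and $V'$ one for $\RD^d_{v_\#\mu}(t)$ generated from $V$, conditionally independent given $X$. By the Markov chain $X\to A\to A'$ we have $I(X;A')\le I(A;A')$, and likewise $I(X;V')\le I(V;V')$. Let $\hat X$ be drawn from the posterior of $X$ given $(A',V')$. Because $(\hat X,A')\overset d=(X,A')$, the pair $(a(\hat X),A')$ has the law of $(A,A')$, and similarly for $v$. The triangle inequality gives, coordinatewise,
\[
|X_i-\hat X_i|\le|A_i-A'_i|+|A'_i-a_i(\hat X)|+|V_i-V'_i|+|V'_i-v_i(\hat X)|\le4\max(\cdots).
\]
Hence $\E D^d_{4t}(X,\hat X)\le2\,\E D^d_t(A,A')+2\,\E D^d_t(V,V')$, and so $\RD^d_\mu(4t)\le2\{\RD^d_{a_\#\mu}(t)+\RD^d_{v_\#\mu}(t)\}+O(\eps)$. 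Integrating contributes a further factor $4$, which gives the constant $8$ in \eqref{decomp}.

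The main obstacle is checking that the posterior-resampled $\hat X$ really has both marginals equal to $\mu$ and that its cost is controlled only through pairs with the correct joint laws. Both facts follow from \eqref{posterior}, but the bookkeeping of which pair has which law is exactly where the constants $2$ and $4$ are produced.
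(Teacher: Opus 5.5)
Your proof is correct and follows the paper's argument. You use time rescaling and comparison of objectives for the first two rules, posterior resampling from two conditionally independent observations with the chain-rule information bound for \eqref{costsum} and \eqref{decomp}, and a bijection of coupling tables for injective pullback. The only differences are cosmetic: you work with $\eps$-optimal couplings instead of exact minimizers, and you use a single four-term maximum instead of applying \eqref{triangle} twice, which gives the same bound $\E D^d_{4t}(X,\hat X)\le 2\E D^d_t(A,A')+2\E D^d_t(V,V')$.
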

\begin{proof}
The first two assertions follow by a change of scale and by comparing objectives. Monotonicity of each coordinate cost gives
\begin{equation}\label{triangle}
D_{2t}^d(x,z)\le D_t^d(x,y)+D_t^d(y,z).
\end{equation}
For \eqref{costsum}, generate conditionally independent optimal self-coupling observations $U,V$ for $d,e$ at scale $t$, and posterior-resample $X'$ given $(U,V)$. Formula \eqref{posterior} and \eqref{triangle} give
$\RD^{d+e}_\mu(2t)\le2\RD^d_\mu(t)+2\RD^e_\mu(t)$. Integrate.
For \eqref{decomp}, use optimal observations of $a(X)$ and $v(X)$ instead. Applying \eqref{triangle} twice gives
$\E D_{4t}^d(X,X')\le2\E D_t^d(a(X),U)+2\E D_t^d(v(X),V)$.
The information bound is the sum of the two component informations. Integration proves the claim. Injectivity gives a bijection of the finite coupling tables, preserving information and cost.
\end{proof}

We record two limit facts. If $A_m\to A$ in $L_1$ under a coupling, gluing couplings gives
\begin{equation}\label{sourceconv}
|B_{A_m}(\mu)-B_A(\mu)|\le M_\mu\E\|A_m-A\|_2,
\quad M_\mu=\max_{x\in\operatorname{supp}\mu}\|x\|_2.
\end{equation}
If finite-label cost matrices $c_m$ increase entrywise to $c$, their optimized information-plus-cost values increase to the value for $c$. Indeed, take a convergent subsequence of minimizers on the compact coupling polytope. For each fixed $k$, lower semicontinuity bounds the objective with $c_k$ at the limit by the limit of the minima. Let $k\to\infty$. Testing any fixed coupling gives the opposite inequality. This includes infinite entries, with $0\cdot\infty=0$. Monotone convergence then applies to the integrated values.

We use $A\cx B$ for convex order. The martingale characterization supplies a coupling with $\E[B\mid A]=A$, and thus
\begin{equation}\label{cxpair}
A\cx B\quad\Longrightarrow\quad B_A(\mu)\le B_B(\mu).
\end{equation}
To check this, lift any coupling of $X,A$ through the martingale kernel. Independent coordinatewise convex orders give a vector convex order by successive conditional expectations. For symmetric scalars, magnitude stochastic domination implies convex order: sign-averaging any convex function makes it even and nondecreasing on $[0,\infty)$. Also $(\E|A|)\eps\cx A$ for symmetric $A$, by conditional Jensen given its sign.

\section{Capped exponentials and the invariant variational bound}
Let $Z_i=\eps_iE_i$ be independent symmetric mean-one exponentials, and
$\xi_i^\beta=\eps_i\min(E_i,\beta_i)$, with $1\le\beta_i\le\infty$.
Their coordinate costs are
\begin{equation}\label{capcost}
\psi_\beta(r)=\begin{cases}r^2/4,&0\le r\le2,\\2\beta-1,&r>2.\end{cases}
\end{equation}
The second branch is infinite when $\beta=\infty$. Write $\RR_\beta$ for this family.

\subsection{Classical exponential input}
For $p\ge2$,
\begin{equation}\label{expmoment}
\|\ip{h}{Z}\|_p\asymp\sqrt p\|h\|_2+p\|h\|_\infty,
\qquad \|\ip{h}{Z}\|_1\ge c\|h\|_2.
\end{equation}
The upper estimate follows from $\E e^{sZ_i}=(1-s^2)^{-1}$. A largest-coordinate conditioning gives the $p\|h\|_\infty$ lower estimate; the representation $Z_i\overset d=\sqrt{2E_i}g_i$ gives the Gaussian term by Jensen for $p\ge2$. Second and fourth moments give the $L_1$ bound by interpolation.
The exponential chaining theorem \cite{LT} gives maps $\pi_j:T\to T$ for every finite $T$, with $\pi_0$ constant, $|\pi_j(T)|\le e^{2^j}$, eventually $\pi_j=\mathrm{id}$, and
\begin{equation}\label{chain}
\sup_{x\in T}\sum_{j\ge0}\bigl(2^{j/2}\|x-\pi_jx\|_2+2^j\|x-\pi_jx\|_\infty\bigr)
\le C S_Z(T).
\end{equation}
It also implies the following comparison. If $A$ is centered and
$\|\ip{h}{A}\|_p\le a\|\ip{h}{Z}\|_p$ for all $h,p\ge1$, then
\begin{equation}\label{expcomparison}
S_A(T)\le Ca S_Z(T).
\end{equation}
For clarity, one can obtain this directly by chaining along the maps in \eqref{chain}: a union bound and Markov's inequality at a sufficiently large multiple of $2^j$ control all level-$j$ increments by their exponential moment size. Integration of the union bound and \eqref{expmoment} give \eqref{expcomparison}. The constant root term has expectation zero.

\begin{lemma}[Conditional tails]\label{condtail}
If $S$ has an integrable density proportional to $e^{-|s|}h(s)$ with $h$ log-concave, and $m=\E|S|$, then
\[
m\le|\E S|+2,\qquad \E(|S|-t)_+\le Cm e^{-ct/m}.
\]
If $|\E S|\le b$ and $b\ge1$, then
\begin{equation}\label{clipmean}
|\E(S-\clip_{Kb}S)|\le\epsilon_K(1+|\E S|),\qquad
\epsilon_K=Ce^{-cK}.
\end{equation}
\end{lemma}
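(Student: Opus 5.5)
The density of $S$ is $f(s)=e^{-|s|}h(s)/Z$ with $h$ log-concave, so $f$ is itself log-concave: it is the product of the log-concave $e^{-|s|}$ and $h$. The plan uses only this fact together with the explicit $e^{-|s|}$ factor. First we bound $m$ by the mean. Log-concavity of $h$ gives a monotone-tilt structure. On the side opposite to where $h$ increases, the density decays at least like $e^{-|s|}$, because $h$ is nonincreasing there, or has a one-sided exponential tilt bounded by the other side. Concretely, $h$ is unimodal. Let $s_0$ be a mode of $h$. For $s<\min(s_0,0)$, say, $f(s)\le f(\min(s_0,0))e^{-(\min(s_0,0)-s)}$. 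So the mass on the ``wrong'' side of $0$ contributes at most a bounded amount to $\E S^-$ relative to $\E S^+$.

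To make this rigorous, suppose $\E S\ge0$ by symmetry. We need $\E S^-\le 1$. Conditionally on $S<0$, the law of $-S$ has density proportional to $e^{-u}h(-u)$ on $u>0$. If $h$ is nondecreasing on $(-\infty,0]$, then $h(-u)$ is nonincreasing in $u$. A log-concave density on $(0,\infty)$ of the form $e^{-u}g(u)$ with $g$ nonincreasing is dominated in likelihood ratio by $\mathrm{Exp}(1)$, so its mean is at most $1$. Hence $\E S^-\le\PP(S<0)$.

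If instead the mode of $h$ is negative, then we argue on the positive side. The same reasoning gives $\E S^+\le\PP(S>0)\le1$, and $\E S\ge0$ forces $\E S^-\le\E S^+\le1$. In either case $m=\E S+2\E S^-\le|\E S|+2$.

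Next comes the tail bound. A log-concave real variable has exponential tails at the scale of its first absolute moment. By the standard Borell-type estimate, $\PP(|S|>t)\le Ce^{-ct/m}$ for log-concave $S$, with the scale given by $\E|S|$; this uses that the density at the median is of order $1/\E|S-\mathrm{med}|$, which is at most $2/m$. Integrating the tail from $t$ to $\infty$ gives $\E(|S|-t)_+\le\int_t^\infty Ce^{-cu/m}\,du=(C/c)\,m\,e^{-ct/m}$.

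Finally we prove \eqref{clipmean}. We write
\[
|\E(S-\clip_{Kb}S)|\le\E(|S|-Kb)_+\le Cm\,e^{-cKb/m}.
\]
By the first part, $m\le b+2\le3b$ because $b\ge1$, so $e^{-cKb/m}\le e^{-cK/3}$. Also $m\le 2+|\E S|\le 2(1+|\E S|)$. Combining these gives the bound with $\epsilon_K=C'e^{-c'K}$.

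The main obstacle is the first step. The claim $\E S^-\le1$ requires a clean case split on the location of the mode of $h$ together with the likelihood-ratio comparison with $\mathrm{Exp}(1)$. Everything after that is the standard log-concave tail bound plus arithmetic.
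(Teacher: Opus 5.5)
Your proof is correct and follows essentially the paper's route. Unimodality of $h$ makes one of $\E S_+,\E S_-$ at most one; your likelihood-ratio comparison with $\mathrm{Exp}(1)$ is the paper's $f(s+t)\le e^{-t}f(s)$. The tail bound is the log-concave exponential decay at scale $m$, and the clipping estimate uses $m\le 3b$ and $m\le 2(1+|\E S|)$ exactly as the paper does.

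The only flaw is the parenthetical justification of the tail bound, where the inequality runs the wrong way. What is true is $\E|S-\mathrm{med}|\le m$ (the median minimizes $\E|S-a|$), not $1/\E|S-\mathrm{med}|\le 2/m$. Uniform $S$ on $[M,M+1]$ is a counterexample to your version. The estimate itself is still valid: apply Borell's lemma to $[-4m,4m]$ after Markov, which is what the paper does through the log-concave survival function.
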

\begin{proof}
If the support is one-sided, $m=|\E S|$. Otherwise $0$ is interior to its support. Convexity of $-\log h$ implies that $h$ is nonincreasing on at least one of the two half-lines when directed away from zero. On that side the density satisfies $f(s+t)\le e^{-t}f(s)$, so the corresponding one-sided first moment is at most one. Thus
$m=|\E S|+2\min(\E S_+,\E S_-)\le|\E S|+2$.
The survival function of $S$ is log-concave by Pr\'ekopa. Markov gives
$\PP(S\ge4m)\le1/4$ and $\PP(S\ge-4m)\ge3/4$.
Extrapolation of its log-survival between these two points gives exponential decay beyond $4m$. Repeat for $-S$, enlarge constants below $4m$, and integrate. Finally $m\le3b$ and $m\le2(1+|\E S|)$ prove \eqref{clipmean}.
\end{proof}

\begin{lemma}[Residual contraction]\label{residual}
Let $U=\E[Z\mid\mathcal F]$. Let $\eta$ have independent coordinates of laws $\delta_iZ_i'$, where $Z_i'$ are symmetric mean-one exponentials and the independent selectors satisfy $\E\delta_i\le q$. No other independence between $Z,\eta,\mathcal F$ is required. If $R=\E[\eta\mid\mathcal F]$ and
$|R_i|\le\epsilon(1+|U_i|)$, $0<\epsilon\le1$, then
\begin{equation}\label{resbound}
\|\ip{h}{R}\|_p\le C(\sqrt\epsilon+\sqrt q)\|\ip{h}{Z}\|_p,
\quad S_R(T)\le C(\sqrt\epsilon+\sqrt q)S_Z(T).
\end{equation}
\end{lemma}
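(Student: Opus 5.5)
The plan is to obtain \eqref{resbound} by combining two bounds on $\ip hR$, each good in one regime, and then to deduce the $S_R$ statement from \eqref{expcomparison}. The second claim is immediate once the first holds for all $h$ and $p\ge1$: $R$ is centered, being a conditional expectation of the centered vector $\eta$, so \eqref{expcomparison} with $a=C(\sqrt\epsilon+\sqrt q)$ gives $S_R(T)\le C(\sqrt\epsilon+\sqrt q)S_Z(T)$. All the work is in the moment comparison.

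\emph{First bound (from $q$).} Since $\ip hR=\E[\ip h\eta\mid\mathcal F]$, conditional Jensen gives $\|\ip hR\|_p\le\|\ip h\eta\|_p$, and this needs no independence between $\eta$ and $\mathcal F$. The coordinates of $\eta$ are independent symmetric variables $\delta_iZ_i'$. For them I will prove the standard moment bound for independent symmetric sums:
\[
\|\ip h\eta\|_p\le C\Bigl(\sqrt p\bigl(\textstyle\sum_ih_i^2\E\delta_i\bigr)^{1/2}+p\max_i|h_i|\,\|\delta_i\|_p\Bigr).
\]
I would obtain it either by symmetrization and Rosenthal/Lata{\l}a-type estimates, or directly from the moment generating function $\E e^{sh_i\delta_iZ_i'}\le1+q\,(h_i s)^2/(1-h_i^2s^2)$. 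The Gaussian part is then at most $\sqrt q\sqrt p\|h\|_2$, which already has the desired $\sqrt q$ factor. The sup part is $p\|h\|_\infty q^{1/p}$, which is small only when $p\lesssim\log(1/q)$.

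\emph{Second bound (from $\epsilon$).} The pointwise hypothesis controls the large-$p$ regime. Pick a coordinate $i_0$ with $|h_{i_0}|=\|h\|_\infty$. I will split $h$ into a ``peaky'' part, the coordinates with $|h_i|$ large relative to $\|h\|_2/\sqrt p$, and a ``flat'' part.
\begin{itemize}
\item Peaky part: it has at most $p$ coordinates. There I use $|R_i|\le\epsilon(1+|U_i|)$ together with $\|U_i\|_p\le\|Z_i\|_p\le p$ (conditional Jensen again). This gives a contribution $\epsilon\,p\sum_{\text{peaky}}|h_i|$, which must then be compared with $\sqrt p\|h\|_2+p\|h\|_\infty$.
\item Flat part: it is handled by the first bound, since $\|\cdot\|_\infty$ of the flat part is at most $\|h\|_2/\sqrt p$, so its sup term is absorbed into the Gaussian term.
\end{itemize}

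To obtain square roots rather than $\epsilon$ and $q$, I will also use the geometric-mean interpolation $|R_i|\le\sqrt{|R_i|}\,\sqrt{\epsilon(1+|U_i|)}$. I then apply H\"older, i.e.\ Cauchy--Schwarz in $L_p$, between the Jensen bound for $\eta$ and the pointwise $\epsilon$ bound. This is presumably the source of $\sqrt\epsilon+\sqrt q$ in \eqref{resbound}.

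\emph{Main obstacle.} I expect the main difficulty to be the sup-norm term $p\|h\|_\infty$ at large $p$. There the first bound loses its smallness, because $q^{1/p}\to1$, while the second bound involves $\ell_1$-type sums of $|h_i|$ which are not dominated by $\sqrt p\|h\|_2+p\|h\|_\infty$ without the peaky/flat split. My first attempt is the following. Treat the at most $p$ peaky coordinates by the pointwise bound, using $\sum_{\text{peaky}}|h_i|\le\sqrt p\,\|h\|_2$ by Cauchy--Schwarz over at most $p$ terms. This gives $\epsilon p\sqrt p\|h\|_2$, which is too large by $\sqrt p$. So I would refine the split dyadically in $|h_i|$ and interpolate at each level with the Jensen bound, accepting $\sqrt\epsilon$ in place of $\epsilon$ as the price. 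If that fails, the fallback is to use \eqref{expmoment} only in the weaker integrated form needed for \eqref{expcomparison}, namely level-$j$ increments at $p=2^j$, where this split is natural.
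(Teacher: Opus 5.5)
\textbf{There is a genuine gap.} Your skeleton matches the paper's proof: Jensen $\|\ip{h}{R}\|_p\le\|\ip{h}{\eta}\|_p$ on the small coordinates, the pointwise bound on the large ones, then \eqref{expmoment}, and \eqref{expcomparison} for $S_R$ since $R$ is centered. But the argument does not close, and it fails exactly at the obstacle you flag.

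\emph{First, the split is at the wrong scale.} With threshold $\|h\|_2/\sqrt p$, the sup term of the $\eta$ bound on the flat part is $Cp\|h_{\mathrm{flat}}\|_\infty\le C\sqrt p\,\|h\|_2$. This carries no small factor at all. It also genuinely carries no $q$: the moment generating function gives $\|\ip{a}{\eta}\|_p\le C\sqrt{qp}\|a\|_2+Cp\|a\|_\infty$, and your version with $\|\delta_i\|_p=q^{1/p}$ in the sup term is false. Take $a=\mathbf{1}$ on $n$ coordinates with $\E\delta_i=1/n$. Exactly one selector fires with probability at least $e^{-1}$, so $\|\ip{a}{\eta}\|_p\ge cp$, while your bound is at most $2C\sqrt p$ once $n\ge e^{p^2}$. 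So on the flat part the smallness must come from $\epsilon$, through the choice of threshold. The paper takes $I$ to be the $k=\min(d,\lceil p/\epsilon\rceil)$ largest $|h_i|$. Then $\|h_{I^c}\|_\infty\le\|h\|_2/\sqrt k\le\sqrt{\epsilon/p}\,\|h\|_2$, and the Jensen bound on $I^c$ becomes $C(\sqrt q+\sqrt\epsilon)\sqrt p\,\|h\|_2$.

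\emph{Second, the peaky-part estimate loses independence.} Applying conditional Jensen coordinatewise and then the triangle inequality (your $\epsilon p\sum_{i\in I}|h_i|$) is where your extra power of $p$ comes from. Apply Jensen to the whole sum instead: $\sum_{i\in I}|h_i||U_i|\le\E[\sum_{i\in I}|h_i||Z_i|\mid\mathcal F]$, and for independent $Z_i$ one has $\|\sum_i a_i|Z_i|\|_p\le\|a\|_1+C\sqrt p\|a\|_2+Cp\|a\|_\infty$. This gives $\|\ip{h_I}{R}\|_p\le 2\epsilon\|h_I\|_1+C\epsilon(\sqrt p\|h\|_2+p\|h\|_\infty)$, with $\epsilon\|h_I\|_1\le\epsilon\sqrt k\,\|h\|_2\le C\sqrt{\epsilon p}\,\|h\|_2$. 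So $\sqrt\epsilon$ is simply the balance point $\epsilon\sqrt k=p/\sqrt k$ between the two halves, not the output of a H\"older interpolation.

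Your geometric-mean/H\"older idea would not work in any case. The pointwise bound destroys cancellation, so it can only be afforded on $O(p/\epsilon)$ coordinates. Interpolating through $\sum_i|h_i||R_i|$ over all coordinates, with one factor controlled via $|\eta_i|$, produces a bound of order at least $\sqrt{\epsilon q}\,\|h\|_1$ (when $\E\delta_i=q$). That is not dominated by $\sqrt p\|h\|_2+p\|h\|_\infty$ when $d$ is large. With the $p/\epsilon$ threshold and whole-sum Jensen, neither the dyadic refinement nor the fallback is needed. The range $1\le p<2$ then follows from the case $p=2$ and the $L_1$ lower bound in \eqref{expmoment}.
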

\begin{proof}
For $p\ge2$, let $I$ consist of the $k=\min(d,\lceil p/\epsilon\rceil)$ largest $|h_i|$. Conditional Jensen and the coordinate bound yield
\[
\|\ip{h_I}{R}\|_p\le\epsilon\|h_I\|_1+
\epsilon\Big\|\sum_{i\in I}|h_i||Z_i|\Big\|_p
\le C\sqrt{\epsilon p}\|h\|_2+C\epsilon p\|h\|_\infty.
\]
Here the nonnegative exponential sum has $L_p$ norm at most
$\sum a_i+C\sqrt p\|a\|_2+Cp\|a\|_\infty$, by its moment generating function.
Similarly, $\E e^{s\delta_iZ_i'}=1+q_i s^2/(1-s^2)$ gives
$\|\ip{a}{\eta}\|_p\le C\sqrt{qp}\|a\|_2+Cp\|a\|_\infty$.
On $I^c$, $\|h_{I^c}\|_\infty\le\sqrt{\epsilon/p}\|h\|_2$, so Jensen gives a bound
$C(\sqrt q+\sqrt\epsilon)\sqrt p\|h\|_2$ for its contribution.
Combine with \eqref{expmoment}. For $1\le p<2$, use the result at $2$ and the $L_1$ lower bound. Finally $R$ is centered, so \eqref{expcomparison} applies.
\end{proof}

Let $G$ be a finite group of coordinate permutations preserving finite caps $\beta$.
Write $F_A(u)=S_A(Gu)$ and
\[
\|a\|_{\beta,1}=\sum_i\beta_i|a_i|,\qquad
J_\beta(u)=\inf_v\{\|u-v\|_{\beta,1}+F_Z(v)\}.
\]
\begin{proposition}\label{invariantbound}
$F_{\xi^\beta}(u)\le J_\beta(u)\le C F_{\xi^\beta}(u)$ universally.
\end{proposition}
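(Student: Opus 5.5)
The plan is to prove the two inequalities separately. The lower inequality $F_{\xi^\beta}\le J_\beta$ is elementary. The upper inequality $J_\beta\le CF_{\xi^\beta}$ needs a $G$-invariant construction of a near-optimal splitting point $v$.

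\emph{Lower bound.} Fix $v$. For each $g\in G$ we write $\ip{gu}{\xi^\beta}=\ip{gu-gv}{\xi^\beta}+\ip{gv}{\xi^\beta}$. Since $|\xi^\beta_i|\le\beta_i$ and $G$ preserves $\beta$, the first term is at most $\|g(u-v)\|_{\beta,1}=\|u-v\|_{\beta,1}$. For the second term, $\xi^\beta_i\cx Z_i$: both are symmetric, and $Z_i$ is the martingale extension of the truncation, since conditionally on $E_i\ge\beta_i$ we have $\E[E_i\mid E_i\ge\beta_i]=\beta_i+1\ge\beta_i$, and the correction is made through sign averaging. Coordinate independence then gives $\xi^\beta\cx Z$ as vectors. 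Hence, by \eqref{cxpair} and \eqref{supmu}, $S_{\xi^\beta}(Gv)\le S_Z(Gv)$. Taking the maximum over $g$, then expectation, then the infimum over $v$ gives $F_{\xi^\beta}(u)\le J_\beta(u)$.

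\emph{Upper bound: the variational construction.} We may assume $F_{\xi^\beta}(u)<\infty$, with all $\beta_i$ finite. I would obtain the competitor $v$ from a conditional expectation. Let $\mathcal F$ be generated by the maximizing index $\gamma\in G$ of $\ip{gu}{\xi^\beta}$, chosen $G$-equivariantly by randomizing ties uniformly over the group. Set $w=\E[\gamma^{-1}\xi^\beta]$. This is invariant information; by the symmetry of $G$ one gets $F_{\xi^\beta}(u)=\ip{u}{w}$ after averaging. The duality underlying $J_\beta$ is
\[
J_\beta(u)=\sup\{\ip{u}{y}-F_Z^*(y):\|y\|_{\beta,\infty}\le1\},
\]
where $\|y\|_{\beta,\infty}=\max_i|y_i|/\beta_i$ is the dual norm of $\|\cdot\|_{\beta,1}$ and $F_Z^*$ is the convex conjugate of the convex, $G$-invariant function $F_Z$. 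This is inf-convolution duality, an elementary finite-dimensional input. Also $F_Z^*$ vanishes on the set $K_Z$ of means $\E[\gamma'^{-1}Z]$ over $\sigma(Z,\text{noise})$-measurable choices $\gamma'$, and is $+\infty$ outside its closed convex hull. So it suffices to show that any dual-feasible $y$ with $\ip{u}{y}-F^*_Z(y)$ near $J_\beta(u)$ can be rescaled, $y\mapsto y/C$, to become a mean $\E[\gamma^{-1}\xi^\beta]$ realized by the capped source. Equivalently, I will show that every realized $Z$-mean of the form $\E[\gamma^{-1}Z]$, after being clipped coordinatewise at level $\beta$, is within a factor $C$ of a $\xi^\beta$-realizable point.

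\emph{Main obstacle: replacing $Z$ by $\xi^\beta$ inside the selection.} Write $Z_i=\xi_i^\beta+\eta_i$ with $\eta_i=\eps_i(E_i-\beta_i)_+$. Given the selected index, $\eta_i$ is $\delta_iZ_i'$-like with $\E\delta_i=e^{-\beta_i}\le e^{-1}$. This small selector probability is not yet enough, so I would first pass to caps $K\beta$ using the scaling \eqref{capcost}; the selector probability then becomes $q=e^{-K}$. The conditional law of each $Z_i$ given $\mathcal F$ has density $e^{-|s|}$ times a log-concave factor, because the selection event is an intersection of half-spaces in $Z$. Lemma~\ref{condtail} then gives $|\E[Z_i-\clip_{K\beta_i}Z_i\mid\mathcal F]|\le\epsilon_K(1+|U_i|)$. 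Lemma~\ref{residual} now bounds the residual process by $C(\sqrt{\epsilon_K}+\sqrt q)F_Z$. For $K$ a large absolute constant this gives
\[
F_Z(\text{clipped part})\le CF_{\xi^{K\beta}}+\tfrac12F_Z,
\]
and the $\tfrac12F_Z$ term is absorbed. Feeding this into the dual description yields $J_\beta\le CJ_{K\beta}'\le C'F_{\xi^\beta}$. The last step uses $F_{\xi^{K\beta}}\le KF_{\xi^\beta}$, which holds by a coupling of capped exponentials at caps $\beta$ and $K\beta$ via \eqref{quantile}-type subadditivity.

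The delicate point is ensuring that the $G$-invariance is preserved under conditioning, so that the residual bound is uniform over the orbit. Beyond that, the work lies in this absorption argument and in verifying the log-concavity of the conditional laws.
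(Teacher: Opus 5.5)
\textbf{Verdict: genuine gap in the upper bound.} Your lower bound is fine. The clean reason for $\xi^\beta\cx Z$ is that magnitude domination implies convex order for symmetric scalars; the natural coupling you describe is not a martingale, since $\E[Z_i\mid\xi_i^\beta=\pm\beta_i]=\pm(\beta_i+1)$. Your dual formula $J_\beta(u)=\sup\{\ip{u}{y}:y\in\overline{K_Z},\ \max_i|y_i|/\beta_i\le1\}$ is also correct.

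The residual argument needs a single selection $\gamma$, with mean $w=\E[\gamma^{-1}Z]$, satisfying three conditions at once:
(a) it is dual feasible and equivariant, i.e.\ $|w_i|\le\beta_i$ and $\E[Z\mid\gamma]=\gamma w$, so that \eqref{clipmean} applies with $b=\beta_i$ and $|r_i|\le\theta_K\beta_i$ follows;
(b) it is nearly optimal, $J_\beta(u)\le\ip{u}{w}+o(1)$;
(c) the conditional laws of $Z$ given $\gamma$ are log-concave tilts of $e^{-\|z\|_1}$, so that Lemma~\ref{condtail} applies.
You never produce such a selection.

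\emph{Your $\xi^\beta$-based selection.} The $\sigma$-field you define, from the argmax of $\ip{gu}{\xi^\beta}$, fails (c). Its cells $\{z:\ip{gu}{\clip_\beta z}\ge\ip{hu}{\clip_\beta z}\ \forall h\}$ are not intersections of half-spaces, because clipping is nonlinear. It also fails (b): nothing relates it to $J_\beta(u)$, since its mean only reproduces $F_{\xi^\beta}(u)=\ip{u}{w}$.

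\emph{A near-optimal dual point.} Such a point has (a) and (b), but your half-space argument covers only hard argmax selections of $\ip{gv}{Z}$. The dual optimum is generally not such a mean. Take $G=S_d$, $\beta\equiv1$, $u=e_1$ and large $d$. Then $J_\beta(e_1)=1$, attained at $(1,-\tfrac1{d-1},\dots,-\tfrac1{d-1})$. This point mixes the argmax selection in direction $e_1$ (first coordinate $\E\max_jZ_j$) with the uniform selection. Mixing destroys (c): the tilt becomes $a\ind_{C_g}+b$ with $a,b>0$, which is bounded and nonconstant, hence not log-concave. Prékopa then no longer gives the form required in Lemma~\ref{condtail}.

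\textbf{Absorption step.} A remainder $\tfrac12F_Z(u)$ cannot be absorbed, because $F_Z(u)$ may be far larger than $J_\beta(u)$. In the example, $F_Z(e_1)=\E\max_jZ_j\to\infty$ while $J_\beta(e_1)=1$. What is needed is $\ip{u}{r}\le\theta_KJ_\beta(u)$ for $r=\E[\gamma^{-1}(Z-\clip_{K\beta}Z)]$. This follows from two bounds: $|r_i|\le\theta_K\beta_i$, which rests on (a), and $\ip{v}{r}\le\theta_KF_Z(v)$ for every $v$, from Lemma~\ref{residual}. Split $u=(u-v)+v$ and infimize. Then $J_\beta(u)\le KF_{\xi^\beta}(u)+\theta_KJ_\beta(u)+o(1)$, and the $\theta_K J_\beta(u)$ term is absorbed.

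\textbf{How the paper gets (a)--(c).} It smooths. It minimizes $\|u-v\|_{\beta,1}+F_\tau(v)$ with $F_\tau(v)=\tau\E\log\sum_ge^{\ip{gv}{Z}/\tau}$ and sets $w=\nabla F_\tau(v_\tau)$.
First-order optimality gives $|w_i|\le\beta_i$ and $\ip{u-v_\tau}{w}=\|u-v_\tau\|_{\beta,1}$.
The Gibbs entropy identity gives $J_\beta(u)\le\ip{u}{w}+\tau\log|G|$.
Finally $w=\E[L^{-1}Z]$ for the equivariant Gibbs selection $L$, whose tilts $\kappa_g$ are log-concave because $\log\kappa_g$ is affine minus log-sum-exp.
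Your dual route could be repaired by realizing near-optimal dual points as such soft Gibbs means rather than hard argmax means, but then it is essentially the paper's proof.
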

\begin{proof}
The first inequality is the triangle inequality and sign contraction. For the other, smooth the reference functional:
\[
F_\tau(v)=\tau\E\log\sum_{g\in G}e^{\ip{gv}{Z}/\tau},\quad
F_Z(v)\le F_\tau(v)\le F_Z(v)+\tau\log|G|.
\]
Since $F_\tau\ge\tau\log|G|$, the continuous objective
$\|u-v\|_{\beta,1}+F_\tau(v)$ is coercive. At a minimizer $v_\tau$, put $w=\nabla F_\tau(v_\tau)$. Optimality gives
\begin{equation}\label{optimal}
|w_i|\le\beta_i,\qquad \ip{u-v_\tau}{w}=\|u-v_\tau\|_{\beta,1}.
\end{equation}
Differentiation under the expectation is justified by the integrable gradient bound $\|Z\|_2$.
Choose a random $L\in G$ conditionally on $Z=z$ with probabilities
\[
\kappa_g(z)=\frac{e^{\ip{gv_\tau}{z}/\tau}}{\sum_h e^{\ip{hv_\tau}{z}/\tau}}.
\]
The pair $(L,Z)$ has the law of $(hL,hZ)$ for every $h\in G$.
Hence $L$ is uniform, $w=\E[L^{-1}Z]$, and
\[
\E[Z\mid L]=Lw.
\]
The Gibbs entropy identity is
$F_\tau(v_\tau)=\ip{v_\tau}{w}+\tau H(L\mid Z)$.
Together with \eqref{optimal} and $H(L\mid Z)\le\log|G|$, it implies
\begin{equation}\label{gibbs}
J_\beta(u)\le\ip{u}{w}+\tau\log|G|.
\end{equation}
Put $\zeta=Z-\clip_{K\beta}Z$, $R=\E[\zeta\mid L]$, and $r=\E[L^{-1}\zeta]$.
Equivariance gives $R=Lr$. The conditional density given $L=g$ is proportional to
$e^{-\|z\|_1}\kappa_g(z)$. Since $\log\kappa_g$ is affine minus log-sum-exp, it is concave. Pr\'ekopa then gives each conditional marginal the form in Lemma~\ref{condtail}.
Thus, with $U=Lw$,
\[
|R_i|\le\epsilon_K(1+|U_i|),\quad |r_i|\le2\epsilon_K\beta_i.
\]
Memorylessness makes $\zeta_i$ independent sparse exponentials with selector probabilities $e^{-K\beta_i}\le e^{-K}$. Lemma~\ref{residual} gives, for every $v$,
\[
\ip{v}{r}\le\max_g\ip{gv}{r}
=\E\max_g\ip{gv}{R}\le\theta_K F_Z(v),\qquad \theta_K\le Ce^{-cK}.
\]
Enlarge $\theta_K$ so $|r_i|\le\theta_K\beta_i$. Decomposing $u=(u-v)+v$ and infimizing gives $\ip{u}{r}\le\theta_KJ_\beta(u)$.
Also
\[
\ip{u}{w}=\E\ip{Lu}{\clip_{K\beta}Z}+\ip{u}{r}
\le K F_{\xi^\beta}(u)+\theta_KJ_\beta(u),
\]
using magnitude contraction $\min(E_i,K\beta_i)\le K\min(E_i,\beta_i)$.
Fix a universal $K$ with $\theta_K\le1/2$, use \eqref{gibbs}, and let $\tau\downarrow0$. The bound is uniform in $G$.
\end{proof}

\section{Haar entropy and replication}\label{haar}
For $T=Gu$, let $\lambda$ be uniform on $T$. Transitivity and cost invariance make
\[
Z_t=\sum_y\lambda(y)e^{-D_t^\beta(x,y)}
\]
independent of $x$. Symmetry gives the same column sums. Therefore
\[
Q_t(x,y)=\lambda(x)\lambda(y)e^{-D_t^\beta(x,y)}/Z_t
\]
is a self-coupling. For any finite-cost coupling $P$,
$I(P)+\E_P D_t^\beta=-\log Z_t+\KL(P\Vert Q_t)$. Consequently
\begin{equation}\label{haaridentity}
\RD_\lambda^\beta(t)=-\log Z_t,\qquad
\RR_\beta(\lambda)=\int_0^\infty-\log Z_t\,dt=:H_\beta(T).
\end{equation}
For a $G$-invariant source, $B_A(\lambda)=S_A(T)$: take a maximizing index and apply an independent uniform group element simultaneously to it and the source.

\begin{lemma}\label{uncapped}
For any probability law $\nu$ supported on a finite set $T$, $\RR_\infty(\nu)\le C S_Z(T)$.
\end{lemma}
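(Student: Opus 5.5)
The plan is to bound $\RD^\infty_\nu(t)$ scale by scale. At each scale I build a self-coupling of $\nu$ by observing a chaining approximation of $X$ and posterior-resampling via \eqref{posterior}, and then integrate in $t$ using \eqref{chain}. Recall that $\psi_\infty(r)=r^2/4$ for $r\le2$ and $+\infty$ otherwise. Hence $D_t^\infty(x,y)=\|x-y\|_2^2/(4t^2)$ when $\|x-y\|_\infty\le2t$, and the cost is infinite otherwise. Let $\pi_j$ be the maps from \eqref{chain} for $T$, and write $a_j(x)=\|x-\pi_jx\|_\infty$ and $b_j(x)=\|x-\pi_jx\|_2$.

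\textbf{Adaptive observation.} Using a fixed level $j$ for all $x$ at a given $t$ would produce the quantity $\sum_j\sup_x$, whereas \eqref{chain} only controls $\sup_x\sum_j$. So I choose the level pointwise:
\[
j_t(x)=\min\{j\ge0:\ a_j(x)\le t,\ b_j(x)\le 2^{j/2}t\}.
\]
This minimum exists because $\pi_j=\mathrm{id}$ eventually. Observe $O=(j_t(X),\pi_{j_t(X)}X)$ and generate $X'$ from the posterior of $X$ given $O$. By \eqref{posterior}, $X'\sim\nu$ and $X'$ has the same level $j$ and the same approximant $\pi_jX'=\pi_jX$. Two consequences follow. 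First, $\|X-X'\|_\infty\le a_j(X)+a_j(X')\le 2t$, so the cost is finite. Second, $\|X-X'\|_2^2\le2b_j(X)^2+2b_j(X')^2$, and since $(X',O)\overset d=(X,O)$,
\[
\E D^\infty_t(X,X')\le \E\, b_{j_t(X)}(X)^2/t^2 .
\]
On the event $j_t\ge1$ this is at most $2^{j_t}$. On the event $j_t=0$ it is just $b_0(X)^2/t^2$.

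The information satisfies $I(X;X')\le H(O)\le H(j_t(X))+\E\log|\pi_{j_t}(T)|\le H(j_t(X))+\E 2^{j_t(X)}$. For the entropy of the integer label I would use the elementary bound $H(N)\le C\,\E 2^N\ind_{\{N\ge1\}}$, obtained by comparing the law of $N$ with the geometric reference law $2^{-n}$ via \eqref{entropy}. In this way the label costs no more than the chaining information itself. Since $\pi_0$ is constant, $j_t=0$ contributes no information.

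\textbf{Integration.} With Tonelli,
\[
\RR_\infty(\nu)\le C\,\E\Bigl[\int_0^\infty 2^{j_t(X)}\ind_{\{j_t(X)\ge1\}}dt+\int_0^\infty \min\bigl(1,b_0(X)^2/t^2\bigr)\ind_{\{j_t(X)=0\}}\,dt\Bigr].
\]
Fix $x$. The event $j_t(x)\ge j$ forces $t<\max(a_{j-1}(x),2^{-(j-1)/2}b_{j-1}(x))$. A layer-cake sum over $j$ then bounds the first integral by $C\sum_j(2^j a_j(x)+2^{j/2}b_j(x))$, and the second by $Cb_0(x)$. Both are at most $CS_Z(T)$ by \eqref{chain}, uniformly in $x$. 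Averaging over $X\sim\nu$ finishes the proof.

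\textbf{Main obstacle.} The step needing the most care is making the level choice adaptive without losing the information bound. The posterior must preserve the level, which holds because $j_t(X)$ is part of $O$, and the entropy of the random label must be absorbed into $\E2^{j_t}$. A secondary check is that $t\mapsto j_t(x)$ is nonincreasing, or at least that the level sets $\{t:j_t(x)\ge j\}$ lie below the thresholds used in the layer-cake sum. This holds directly from the definition by minimality.
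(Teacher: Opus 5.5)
Your architecture matches the paper's proof. You take the first good level $j_t(x)$, observe $O=(j_t(X),\pi_{j_t(X)}X)$, posterior-resample, count labels via $\log|\pi_j(T)|\le2^j$, and integrate by layer-cake through $2^J=1+\sum_{j<J}2^j$. Bounding the cost through the shared approximant replaces the paper's use of \eqref{triangle} at scale $2t$, and that part is fine.

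\textbf{The gap.} The gap is the step you flagged yourself. The bound $H(N)\le C\,\E 2^N\ind_{\{N\ge1\}}$ is false. If $\PP(N=1)=\epsilon=1-\PP(N=0)$, then $H(N)\ge\epsilon\log(1/\epsilon)$, which is not $O(\epsilon)$. No reference law can rescue it, since the reference law would need mass one at $0$. The label $\ind_{\{j_t(X)\ge1\}}$ carries information, and entropy is not a pointwise function of $x$.

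In fact your integrated display is itself false. Take $d=1$, $T=\{x_0,x_1\}$, $\pi_0\equiv x_0$, $\pi_j=\mathrm{id}$ for $j\ge1$ (these maps have all the properties your argument uses), and $\nu=(1-\epsilon)\delta_{x_0}+\epsilon\delta_{x_1}$. For $t<|x_1-x_0|/2$ only the diagonal coupling has finite cost, so $\RR_\infty(\nu)\ge\frac12|x_1-x_0|\,\epsilon\log(1/\epsilon)$. Your right-hand side, by contrast, equals $3C\epsilon|x_1-x_0|$.

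\textbf{The repair.} The fix is the paper's global cutoff.

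First, keep a constant in the label entropy. The geometric reference law gives $H(j_t(X))\le\log2\,\E(j_t(X)+1)\le\E2^{j_t(X)}\le1+\E2^{j_t(X)}\ind_{\{j_t(X)\ge1\}}$.

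Second, set $R:=\max_{x\in T}\|x-\pi_0x\|_2$. For $t\ge R$ you have $j_t\equiv0$ on $T$, because $\|\cdot\|_\infty\le\|\cdot\|_2$. So $O$ is deterministic there and the label costs nothing. The constant therefore contributes at most $R$ after integration, and $R\le CS_Z(T)$ by the $j=0$ term of \eqref{chain}.

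This is what the paper does. It bounds $H(O)\le3\E2^{J_t(X)}$ by giving each of the at most $e^{2^j}$ level-$j$ labels weight $e^{-3\cdot2^j}$. It uses this bound only for $t$ below $R_*=\frac12\max_x\|x-\pi_0x\|_2$, and uses a constant observation above it.

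With this change your estimate becomes $\RR_\infty(\nu)\le CR+C\,\E[\cdots]$. Your final step, bounding the bracket by the supremum over $x$ in \eqref{chain}, then completes the proof.
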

\begin{proof}
Take \eqref{chain} and set
$a_j(x)=\tfrac12\max\{2^{-j/2}\|x-\pi_jx\|_2,\|x-\pi_jx\|_\infty\}$.
If $t>a_j(x)$, then $D_t^\infty(x,\pi_jx)\le2^j$.
Let $J_t(x)$ be the first such $j$, and observe $O=(J_t(X),\pi_{J_t(X)}X)$.
There are at most $e^{2^j}$ labels at level $j$. Assigning each weight $e^{-3\cdot2^j}$ gives total weight at most one and hence $H(O)\le3\E2^{J_t(X)}$ by nonnegativity of relative entropy. Posterior resampling and \eqref{triangle} yield
$\RD_\nu^\infty(2t)\le C\E2^{J_t(X)}$.
Put $R_* =\max_xa_0(x)=\tfrac12\max_x\|x-\pi_0x\|_2$. Since
$2^J=1+\sum_{j<J}2^j$ and $J_t(x)>j$ implies $t\le a_j(x)$,
\[
\int_0^{R_*}\E2^{J_t(X)}dt\le R_*+\E\sum_j2^ja_j(X).
\]
For $t>R_*$ a constant observation and independent copies give
$\RD_\nu^\infty(2t)\le2\E D_t^\infty(X,\pi_0X)\le2R_*^2/t^2$.
Integrate and apply \eqref{chain}. If $R_*=0$, the support is a singleton and the claim is immediate.
\end{proof}

\begin{proposition}\label{haarlower}
For any invariant orbit, including infinite caps,
$H_\beta(Gu)\le C F_{\xi^\beta}(u)$.
\end{proposition}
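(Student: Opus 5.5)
The plan is to combine the invariant variational bound, Proposition~\ref{invariantbound}, with the decomposition rule \eqref{decomp} of Lemma~\ref{calculus}. The orbit is split as $gu=g(u-v)+gv$. The piece $gv$ is controlled by the uncapped bound, Lemma~\ref{uncapped}. The piece $g(u-v)$ is controlled by a zero-information coupling whose integrated cost is at most a constant times $\|u-v\|_{\beta,1}$.

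\emph{Finite caps.} Assume first that all $\beta_i<\infty$.
\begin{enumerate}
\item By Proposition~\ref{invariantbound} it suffices to show $H_\beta(Gu)\le C\bigl(\|u-v\|_{\beta,1}+F_Z(v)\bigr)$ for near-minimizers $v$ of $J_\beta(u)$.
\item The map $gu\mapsto gv$ must be well defined on $T=Gu$, which requires $v$ to be fixed by the stabilizer $G_u$. To arrange this, replace $v$ by its average $\bar v=|G_u|^{-1}\sum_{h\in G_u}hv$.
\item This replacement does not increase the objective. First, $\|\cdot\|_{\beta,1}$ is $G$-invariant because $G$ preserves $\beta$, and $hu=u$, so $\|u-hv\|_{\beta,1}=\|u-v\|_{\beta,1}$. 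Second, $F_Z(v)=\E\max_{g}\ip{gv}{Z}$ is convex and $G$-invariant. Hence both terms at $\bar v$ are at most their values at $v$.
\end{enumerate}
So we may take $v$ to be $G_u$-invariant. Then $a(gu)=g(u-v)$ and $b(gu)=gv$ are well-defined maps on $T$ with $x=a(x)+b(x)$. By \eqref{haaridentity} and \eqref{decomp},
\[
H_\beta(T)=\RR_\beta(\lambda)\le 8\{\RR_\beta(a_\#\lambda)+\RR_\beta(b_\#\lambda)\}.
\]

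\emph{The $v$-part.} Since $\psi_\beta\le\psi_\infty$ pointwise, we have $\RR_\beta\le\RR_\infty$. Lemma~\ref{uncapped} then gives
\[
\RR_\beta(b_\#\lambda)\le C S_Z(Gv)=C F_Z(v).
\]

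\emph{The $(u-v)$-part.} Write $w=u-v$ and use independent copies $X,X'$ of $a_\#\lambda$, which have zero information. From \eqref{capcost}, $\psi_{\beta_i}(r)\le\min(r^2,2\beta_i)$. A direct computation gives
\[
\int_0^\infty\min(s^2/t^2,2\beta_i)\,dt=2\sqrt{2\beta_i}\,s\le C\beta_i s,
\]
where the last step uses $\beta_i\ge1$. For $X=gw$ and $X'=g'w$,
\[
\RR_\beta(a_\#\lambda)\le C\,\E\sum_i\beta_i\bigl(|(gw)_i|+|(g'w)_i|\bigr)=2C\|w\|_{\beta,1},
\]
where the equality uses that $G$ preserves $\beta$. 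Combining the two parts gives $H_\beta(Gu)\le C J_\beta(u)\le C'F_{\xi^\beta}(u)$.

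\emph{Infinite caps.} The main obstacle is that Proposition~\ref{invariantbound} is stated only for finite caps. I would first try truncation. The caps $\beta\wedge M$ are still $G$-invariant, and $F_{\xi^{\beta\wedge M}}(u)\le F_{\xi^\beta}(u)$ by magnitude contraction together with \eqref{cxpair}. The costs $\psi_{\beta_i\wedge M}$ increase to $\psi_{\beta_i}$ as $M\to\infty$. By the monotone limit fact of Section~\ref{prelim}, this gives $H_{\beta\wedge M}(Gu)\uparrow H_\beta(Gu)$, and the bound passes to the limit. When all caps are infinite, the claim is also immediate from Lemma~\ref{uncapped}, because $H_\infty(Gu)=\RR_\infty(\lambda)\le CS_Z(Gu)=CF_{\xi^\infty}(u)$.
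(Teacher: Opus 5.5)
Your proof follows the paper's argument step for step. You average $v$ over the stabilizer of $u$ so that $gu\mapsto g(u-v)$ and $gu\mapsto gv$ are well defined, split with \eqref{decomp}, and bound the $v$-part by Lemma~\ref{uncapped} and $\psi_\beta\le\psi_\infty$. You then bound the $(u-v)$-part with the independent self-coupling, close with Proposition~\ref{invariantbound}, and truncate infinite caps.

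\textbf{The error.} One intermediate step is false. The pointwise bound $\psi_{\beta_i}(r)\le\min(r^2,2\beta_i)$ fails for $2<r<\sqrt{2\beta_i-1}$, where $\psi_{\beta_i}(r)=2\beta_i-1>r^2$; for instance $\beta_i=10$, $r=3$ gives $19>9$. If the bound held, it would give $\int_0^\infty\psi_{\beta_i}(s/t)\,dt\le2\sqrt{2\beta_i}\,s$. That contradicts the exact value $\beta_i s$ once $\beta_i>8$: the capped cost really is linear in $\beta_i$, not of order $\sqrt{\beta_i}$.

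\textbf{The fix.} Integrate directly, as the paper does. For $t<s/2$ one has $\psi_{\beta_i}(s/t)=2\beta_i-1$, and for $t\ge s/2$ one has $\psi_{\beta_i}(s/t)=s^2/(4t^2)$. Hence $\int_0^\infty\psi_{\beta_i}(s/t)\,dt=(2\beta_i-1)s/2+s/2=\beta_i s$. Combine this with $|x_i-x_i'|\le|x_i|+|x_i'|$ and the $G$-invariance of $\beta$. This gives $\RR_\beta(a_\#\lambda)\le2\|u-v\|_{\beta,1}$, which is exactly the bound you go on to use, so the rest of the proof stands.

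Your treatment of infinite caps is a valid and slightly more direct variant of the paper's appeal to $L_1$ convergence of the truncated sources. You use the monotonicity $F_{\xi^{\beta\wedge M}}(u)\le F_{\xi^\beta}(u)$, which follows from coordinatewise convex order and convexity of $x\mapsto\max_g\ip{gu}{x}$, together with the increasing-cost limit fact.
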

\begin{proof}
First take finite caps. Average a candidate $v$ over the stabilizer of $u$. Convexity and invariance show that neither term defining $J_\beta(u)$ increases. Then $gu\mapsto g(u-v)$ and $gu\mapsto gv$ are well-defined, with uniform orbit image laws.
Direct integration gives $\int_0^\infty\psi_\beta(|a|/t)dt=\beta|a|$.
The independent self-coupling therefore gives
$\RR_\beta(\lambda_{u-v})\le2\|u-v\|_{\beta,1}$.
Lemma~\ref{uncapped} and $\psi_\beta\le\psi_\infty$ give
$\RR_\beta(\lambda_v)\le C F_Z(v)$.
Use \eqref{decomp}, infimize, and apply Proposition~\ref{invariantbound}.
For infinite caps replace $\beta_i$ by $\beta_i\wedge m$; the Haar integrands increase and the sources converge in $L_1$. Monotone convergence proves the assertion.
\end{proof}

\begin{proposition}[Type-class limits]\label{types}
Let $\mu=\sum_{j=1}^m p_j\delta_{a_j}$ have positive rational weights. For multiples $N$ of their common denominator, let $T_N(\mu)$ consist of sequences with exactly $Np_j$ occurrences of $a_j$, and let $\lambda_N$ be uniform. For independent copies of any centered integrable $A$,
\begin{equation}\label{typelimits}
\frac1N S_{A^{\oplus N}}(T_N(\mu))\longrightarrow B_A(\mu),\qquad
\frac1N\RR_\beta(\lambda_N)\longrightarrow\RR_\beta(\mu),
\end{equation}
where caps on the left are repeated in each block.
\end{proposition}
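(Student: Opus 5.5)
The two limits in \eqref{typelimits} are of different nature, so I will treat them separately. In both I reduce the $N$-block problem to a finite-dimensional optimization over types, and pass to the limit with an explicit integrable domination in $t$.

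\emph{Source limit.} For fixed realizations $A^{(1)},\dots,A^{(N)}$, the quantity $\max_{x\in T_N(\mu)}\sum_k\ip{x_k}{A^{(k)}}$ is an assignment problem. Each block $k$ receives a label $j$, and label $j$ is used exactly $Np_j$ times. By Birkhoff's theorem \cite{Birkhoff} it equals its linear relaxation. Finite linear duality then gives
\[
\frac1N\max_{T_N(\mu)}\sum_k\ip{x_k}{A^{(k)}}=\min_{\varphi\in\R^m}\Big\{\sum_jp_j\varphi_j+\frac1N\sum_k\max_j\big(\ip{a_j}{A^{(k)}}-\varphi_j\big)\Big\}.
\]
The same semi-discrete duality, with $m$ dual variables, gives
\[
B_A(\mu)=\min_\varphi\Phi(\varphi),\qquad \Phi(\varphi)=\sum_jp_j\varphi_j+\E\max_j\big(\ip{a_j}{A}-\varphi_j\big).
\]
Taking expectations and exchanging $\E$ with $\min$ yields $\frac1NS_{A^{\oplus N}}(T_N)\le B_A(\mu)$.

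For the reverse inequality I proceed as follows. Normalize $\varphi$ by $\sum_jp_j\varphi_j=0$, which is invariant under constant shifts. Since all $p_j>0$ and $A$ is integrable, the minimization can be restricted to a ball $\{|\varphi|\le R_N\}$, where $R_N$ is controlled by $\frac1N\sum_k\|A^{(k)}\|_2$. The empirical objective converges almost surely to $\Phi$ uniformly on compacts. This follows from the pointwise strong law together with convexity, or from the Lipschitz property in $\varphi$. Hence the empirical minimum converges a.s. to $\min\Phi$. Finally, the normalized empirical values are dominated by $M_\mu\frac1N\sum_k\|A^{(k)}\|_2$, which is uniformly integrable, so convergence in $L_1$ follows.

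\emph{Rate--distortion limit, lower bound.} Fix $t$ and any self-coupling $(X^N,Y^N)$ of $\lambda_N$. Each coordinate $X_k$ has law $\mu$, and the cost $D_t^\beta$ is additive over blocks. By the chain rule,
\[
I(X^N;Y^N)\ge\sum_kI(X_k;Y_k)-\Big(\sum_kH(X_k)-H(X^N)\Big),
\]
where the bracket equals $NH(\mu)-\log|T_N|=O(m\log N)$ by Stirling. This gives
\[
\RD_{\lambda_N}(t)\ge\big(N\RD_\mu(t)-Cm\log N\big)_+.
\]
Dividing by $N$ and applying Fatou's lemma yields $\liminf\frac1N\RR_\beta(\lambda_N)\ge\RR_\beta(\mu)$.

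\emph{Rate--distortion limit, upper bound.} Fix $t$ and an optimal coupling $Q$ for $\mu$ at scale $t$; it lives on a finite table. Round $Q$ to a joint type $P_N$ with denominator $N$ whose marginals are exactly $\mu$, with $\|P_N-Q\|\le Cm^2/N$ and $\operatorname{supp}P_N\subseteq\operatorname{supp}Q$. Such a rounding exists because the transportation polytope has integral vertices. Then take $(X^N,Y^N)$ uniform on the joint type class of $P_N$. Each marginal is uniform on $T_N$ by symmetry, and the cost is exactly $N\E_{P_N}D_t^\beta$, which is finite because the support is contained in that of $Q$. The information satisfies
\[
I=2\log|T_N|-\log|\text{joint class}|=NI(P_N)+O(\log N).
\]
Continuity of $I$ on the finite table then gives $\limsup\frac1N\RD_{\lambda_N}(t)\le\RD_\mu(t)$ pointwise.

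\emph{Domination in $t$ (the main obstacle).} Passing from pointwise convergence to convergence of the integrals is where I expect the real difficulty, especially for infinite caps, where $\psi_\infty$ is infinite at small $t$. I would dominate $\frac1N\RD_{\lambda_N}(t)$ by the minimum of two bounds. First, the diagonal coupling gives the bound $\frac1N\log|T_N|\le H(\mu)$. Second, an independent pair of copies of $\lambda_N$ has zero information, and its block pairs $(X_k,Y_k)$ are independent with law $\mu\otimes\mu$; this gives the bound $\E_{\mu\otimes\mu}D_t^\beta\le\sum_i\operatorname{diam}_i^2/(4t^2)$ once $t$ exceeds the diameter. The function $\min(H(\mu),Ct^{-2})$ is integrable, so dominated convergence combined with the pointwise $\limsup$ finishes the proof.
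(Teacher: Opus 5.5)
Your argument is correct, but both halves take a different route from the paper.

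\textbf{Source limit.} The paper uses no duality. It identifies $N^{-1}\max_{T_N}\sum_k\ip{x_k}{A^{(k)}}$ with the pairing of $\mu$ against the empirical law $\nu_N$ of $A^{(1)},\dots,A^{(N)}$. It then observes that this pairing is Lipschitz in $W_1$ of the source law, with constant $\max_j\|a_j\|_2$, and uses $\E W_1(\nu_N,\Law(A))\to0$.

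Your reduction to the $m$-dimensional dual objective plus a uniform law of large numbers also works. As written, however, it relies on strong semi-discrete Kantorovich duality, which goes beyond the finite-dimensional duality the paper lists among its inputs. You can drop it:
\begin{itemize}
\item for the $\liminf$, weak duality $B_A(\mu)\le\min\Phi$ is all you need;
\item for the upper bound, $N^{-1}S_{A^{\oplus N}}(T_N)\le B_A(\mu)$ follows by evaluating a measurable maximizer at an independent uniform block index $K$, since $x_K\sim\mu$ for every $x\in T_N$ and $A^{(K)}\sim A$.
\end{itemize}
Strong duality then comes out as a by-product.

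\textbf{Rate--distortion limit.} The paper relies on its Haar identity $\RD_{\lambda_N}(t)=-\log Z_t$, which holds because block permutations act transitively on $T_N$ and preserve the block-repeated cost. It expands $Z_t$ over joint types with the exact weights $w_N(P)$. It then shows that the minima over types converge to $\RD_\mu(t)$, by mixing a rational approximant with $\operatorname{diag}(p)$. Both inequalities come from one formula, and it reuses machinery already needed for Proposition~\ref{haarlower}.

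Your version is more self-contained. The single-letterization bound $I(X^N;Y^N)\ge\sum_kI(X_k;Y_k)-(NH(\mu)-\log|T_N|)$ gives the $\liminf$ non-asymptotically. It needs no symmetry beyond the block marginals being $\mu$. The uniform coupling on a joint type class then gives the $\limsup$.

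\textbf{Two details to tighten.}
\begin{itemize}
\item Integrality of the plain transportation polytope does not by itself give a rounding close to $Q$. Use instead the box-constrained polytope $\lfloor NQ_{ij}\rfloor\le M_{ij}\le\lceil NQ_{ij}\rceil$ with margins $Np$. It contains $NQ$, has integral vertices by total unimodularity, and forces $M_{ij}=0$ off $\operatorname{supp}Q$.
\item Under two independent copies of $\lambda_N$, the blocks are not mutually independent. Only the per-block law $\mu\otimes\mu$ enters the expected cost, so your domination bound is unaffected.
\end{itemize}
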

\begin{proof}
For $\nu_N=N^{-1}\sum_{k=1}^N\delta_{A_k}$, the assignment theorem gives
$N^{-1}\max_{x\in T_N}\sum_k\ip{x_k}{A_k}=B_\mu(\nu_N)$.
Splitting each mass $Np_j$ into slots reduces this to the usual doubly stochastic assignment problem. Gluing couplings makes $B_\mu$ Lipschitz in $W_1$ with constant $\max_j\|a_j\|_2$. Also $\E W_1(\nu_N,\Law(A))\to0$: approximate $A$ in $L_1$ by a finite-valued vector, apply convergence of empirical frequencies to it, and bound the two approximation errors by the original $L_1$ error. This proves the first limit.

Fix a reference sequence in $T_N$. The proportion of sequences with joint empirical table $P=(P_{ij})$ is
\[
w_N(P)=\frac{\prod_i(Np_i)!\prod_j(Np_j)!}{N!\prod_{ij}(NP_{ij})!}.
\]
The margins are $p$ and all $NP_{ij}$ are integers. Uniformly,
$\log w_N(P)=-N\KL(P\Vert p\otimes p)+O_m(\log(N+1))$;
there are at most $(N+1)^{m^2}$ tables. Using \eqref{haaridentity} shows that
$N^{-1}\RD_{\lambda_N}^\beta(t)$ differs by $O_m(\log(N+1)/N)$ from the minimum of
$\KL(P\Vert p\otimes p)+\sum_{ij}P_{ij}D_t^\beta(a_i,a_j)$ over those tables.
These minima converge to the unrestricted self-coupling minimum. Indeed, delete infinite-cost edges; the remaining nonempty transportation polytope is rational. Approximate any feasible point by a rational table $Q$ of denominator $L$ divisible by the denominator of $p$. If $N=kL+R$, the table
$P_N=(kL/N)Q+(R/N)\operatorname{diag}(p)$ is admissible and tends to $Q$. Compactness proves the converse bound.
Finally the diagonal coupling bounds $N^{-1}\RD_{\lambda_N}^\beta(t)$ by $H(p)$, and for $t\ge\operatorname{diam}_\infty(\operatorname{supp}\mu)/2$ the independent coupling bounds it by
$\operatorname{diam}_2(\operatorname{supp}\mu)^2/(4t^2)$. Dominated convergence proves the integrated limit.
\end{proof}

Apply Proposition~\ref{haarlower} to the block-permutation orbit $T_N$, divide by $N$, and use \eqref{typelimits}. This gives $\RR_\beta(\mu)\le C B_{\xi^\beta}(\mu)$ for rational weights. For general weights on fixed finite support, approximate by positive rational weights. Compactness of coupling tables and lower semicontinuity give
$\RD_\mu(t)\le\liminf_n\RD_{\mu_n}(t)$; Fatou applies. Pairings converge: if $\Delta$ is the support diameter, maximal coupling of the labels gives, for every $M>0$,
\[
|B_A(\mu_n)-B_A(\mu)|\le\Delta\{M\|\mu_n-\mu\|_{\rm TV}
+\E[\|A\|_2\ind_{\{\|A\|_2>M\}}]\}.
\]
First let $n\to\infty$, then $M\to\infty$. Hence
\begin{equation}\label{caplower}
\RR_\beta(\mu)\le C B_{\xi^\beta}(\mu)
\end{equation}
for every finite law and all caps.

\section{The clipped-code upper bound}\label{upper}
We prove the other direction, following the clipped-code construction in \cite{invariant,HWW}. First suppose all caps are finite. Set
$d_i(r)=\chi(r)+\beta_i\ind_{\{r>1\}}$. Directly,
\begin{equation}\label{auxcost}
\tfrac12d_i(r)\le\psi_{\beta_i}(2r)\le2d_i(r),\qquad
\RR_d(\mu)\le4\RR_\beta(\mu).
\end{equation}
We will construct maps $x=a(x)+v(x)$ such that
\begin{equation}\label{codegoal}
\inf_{a+v=\mathrm{id}}\{\E\|a(X)\|_{\beta,1}+B_Z(v_\#\mu)\}
\le C\RR_d(\mu).
\end{equation}
Let $r_k=2^{-k}$ and choose integers $m<N$ with
$r_m\ge2\operatorname{diam}_\infty(\operatorname{supp}\mu)$.
For $m<k<N$, take minimizing self-coupling observations $U_k$ at scale $r_k$, conditionally independently given $X\sim\mu$. Set $U_m\sim\mu$ independently and $U_N=X$. Write
\[
h_k=I(X;U_k),\quad e_k=\E D_{r_k}^d(X,U_k),\quad
g_k=\E\sum_i\chi(|X_i-U_{k,i}|/r_k).
\]
At interior levels $h_k+e_k=\RD_\mu^d(r_k)$ and $g_k\le e_k$.
Define
\[
\Delta_k=\clip_{3r_k/2}(U_{k+1}-U_k),\quad
\widetilde v=U_m+\sum_{k=m}^{N-1}\Delta_k,\quad
v(x)=\E[\widetilde v\mid X=x],\quad a(x)=x-v(x).
\]

\paragraph{Discarded increments.}
In a fixed coordinate, call level $k$ bad if $|X_i-U_{k,i}|>r_k$. The two endpoints are good. An edge between good levels loses nothing on clipping. For a maximal bad run $p,\ldots,q$, telescope the unclipped increments on edges $p-1,\ldots,q$. The total discarded part on this block is at most
\[
r_{p-1}+r_{q+1}+\tfrac32\sum_{k=p-1}^q r_k\le\tfrac{17}{2}r_p.
\]
These edge blocks are disjoint, so
$|X_i-\widetilde v_i|\le\tfrac{17}{2}\sum_{k=m+1}^{N-1}r_k\ind_{\{|X_i-U_{k,i}|>r_k\}}$.
Multiplying by caps and using Jensen gives
\begin{equation}\label{discard}
\E\|a(X)\|_{\beta,1}\le\tfrac{17}{2}\sum_{k=m+1}^{N-1}r_ke_k.
\end{equation}

\paragraph{Retained increments.}
The inequality
$\min(3/2,s+t/2)^2\le\tfrac94[\chi(s)+\chi(t)]$ gives
\begin{equation}\label{increment}
\E\|\Delta_k\|_2^2\le\tfrac94r_k^2(g_k+g_{k+1}).
\end{equation}
For any finite observation $O$ and $\|w(O)\|_\infty\le s/2$,
\begin{equation}\label{expentropy}
\E\ip{w(O)}{Z}\le s I(Z;O)+\frac2s\E\|w(O)\|_2^2.
\end{equation}
Indeed, $\log\E e^{\ip{z}{Z}}\le2\|z\|_2^2$ for $\|z\|_\infty\le1/2$; apply \eqref{entropy} to each conditional source law with test function $\ip{w(o)}{Z}/s$.
Under any coupling of $X,Z$, generate the observations through the same fixed kernels, independently of $Z$ given $X$. Then
$I(Z;U_k,U_{k+1})\le h_k+h_{k+1}$.
Apply \eqref{expentropy} with $s=3r_k$, use \eqref{increment}, and sum. As $r_{k-1}=2r_k$, this gives
\[
\sum_{k=m}^{N-1}\E\ip{\Delta_k}{Z}
\le9\sum_{k=m+1}^{N-1}r_kh_k+\tfrac92\sum_{k=m+1}^{N-1}r_kg_k
+6r_NH(\mu)+\tfrac32r_mg_m.
\]
The root $U_m$ is independent of $Z$. Also
$\E[\widetilde v\mid X,Z]=v(X)$, and $v$ is the same map for every coupling. Any coupling of $v(X)$ with $Z$ lifts through the conditional law of $X$ given $v(X)$. Thus the same bound holds for $B_Z(v_\#\mu)$.
Together with \eqref{discard} and $r_mg_m=\E\|X-U_m\|_2^2/r_m$, this proves
\[
\E\|a(X)\|_{\beta,1}+B_Z(v_\#\mu)
\le13\sum_{k=m+1}^{N-1}r_k\RD_\mu^d(r_k)
+\frac{3\operatorname{diam}_2(\operatorname{supp}\mu)^2}{2r_m}+6r_NH(\mu).
\]
Since $\RD(t)$ is nonincreasing,
$\sum_{k\in\mathbb Z}r_k\RD(r_k)\le2\int_0^\infty\RD(t)dt$.
Take the infimum over decompositions and let $m\to-\infty$, $N\to\infty$ to prove \eqref{codegoal}.

Finally, $\xi^\beta\cx Z$ by sign contraction and independent products. Consequently every decomposition satisfies
\[
B_{\xi^\beta}(\mu)\le\E\|a(X)\|_{\beta,1}
+B_{\xi^\beta}(v_\#\mu)
\le\E\|a(X)\|_{\beta,1}+B_Z(v_\#\mu).
\]
Use \eqref{codegoal} and \eqref{auxcost}. For infinite caps, truncate the caps, use \eqref{sourceconv}, and monotonicity of the costs. Combined with \eqref{caplower}, this proves
\begin{theorem}[Capped comparison]\label{capped}
For every finite index law and $\beta\in[1,\infty]^d$,
$B_{\xi^\beta}(\mu)\asymp\RR_\beta(\mu)$ with universal constants.
\end{theorem}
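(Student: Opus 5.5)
The capped comparison is the combination of two bounds proved in the preceding sections, and I would organize it that way.

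\emph{Lower bound.} The inequality $\RR_\beta(\mu)\le C B_{\xi^\beta}(\mu)$ is exactly \eqref{caplower}. It holds for every finite law and all caps, infinite ones included, so this half needs no further argument.

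\emph{Upper bound, finite caps.} Assume first that every $\beta_i<\infty$. Take the auxiliary cost $d_i(r)=\chi(r)+\beta_i\ind_{\{r>1\}}$. Since $\xi^\beta\cx Z$ coordinatewise and the coordinates are independent, \eqref{cxpair} gives $B_{\xi^\beta}(\nu)\le B_Z(\nu)$ for every finite $\nu$. Fix any decomposition $x=a(x)+v(x)$. Because $|\xi^\beta_i|\le\beta_i$, the triangle inequality gives
\[
B_{\xi^\beta}(\mu)\le\E\|a(X)\|_{\beta,1}+B_Z(v_\#\mu).
\]
Taking the infimum over decompositions and applying \eqref{codegoal}, the right side is at most $C\RR_d(\mu)$. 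By \eqref{auxcost} this is at most $4C\RR_\beta(\mu)$.

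\emph{Upper bound, infinite caps.} Set $\beta^{(m)}=\beta\wedge m$. Then $\xi^{\beta^{(m)}}\to\xi^\beta$ in $L_1$ under the natural coupling, so \eqref{sourceconv} gives $B_{\xi^{\beta^{(m)}}}(\mu)\to B_{\xi^\beta}(\mu)$. The costs $\psi_{\beta\wedge m}$ increase to $\psi_\beta$, so $\RR_{\beta^{(m)}}(\mu)\le\RR_\beta(\mu)$. Applying the finite-cap bound to $\beta^{(m)}$ and letting $m\to\infty$ gives the upper bound for all caps.

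\emph{Main obstacle.} All the real work sits in \eqref{codegoal}: the clipped multiscale code and its discarded-versus-retained accounting. At this point only the bookkeeping in the final step is delicate. The constants must not depend on $\beta$ or $d$, and the truncation limit must run in the direction that only needs the monotone inequality $\RR_{\beta\wedge m}\le\RR_\beta$, not actual convergence of the costs.
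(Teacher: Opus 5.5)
Your proposal is correct and matches the paper's argument: the lower half is \eqref{caplower}, and the finite-cap upper half combines $\xi^\beta\cx Z$, the bound $|\xi^\beta_i|\le\beta_i$ on the discarded part, \eqref{codegoal} and \eqref{auxcost}. Infinite caps are handled the same way, by truncating to $\beta\wedge m$, using \eqref{sourceconv} for the pairing and only the monotonicity $\psi_{\beta\wedge m}\le\psi_\beta$ for the cost.
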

In particular, $(1-e^{-1})\eps\cx\xi^1\cx\eps$ coordinatewise. Since $\psi_1(r)=\chi(r/2)$, Theorem~\ref{capped} and \eqref{scale} give
\begin{equation}\label{bernoulli}
B_\eps(\mu)\asymp\RR_\chi(\mu).
\end{equation}
This Bernoulli consequence will absorb the errors in the tail expansion; it is already available before the general-tail proof begins.

\section{The slope-area identity}\label{area}
We prove the convex-geometric identity needed for the transfer, following \cite{HWW}.
For even convex potentials $V_i$ as in \eqref{potential}, write
$\rho_p(h)=\sup\{\ip{h}{z}:\sum_iV_i(z_i)\le p\}$.
\begin{lemma}\label{areaid}
For $p>0$,
\begin{equation}\label{areaformula}
\rho_p(h)=\int_0^\infty\min\{p,D_t^\Psi(0,h)\}\,dt.
\end{equation}
\end{lemma}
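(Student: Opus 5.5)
The plan is to pass to the Lagrangian dual of the convex program defining $\rho_p$, and then to recognize the dual objective as the integral in \eqref{areaformula} through a one-dimensional layer-cake computation.

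\textbf{Step 1: duality.} Since $V_i(0)=0$, the point $z=0$ strictly satisfies the constraint $\sum_iV_i(z_i)\le p$ when $p>0$. Slater's condition therefore gives strong duality:
\[
\rho_p(h)=\inf_{\lambda>0}f(\lambda),\qquad f(\lambda)=\lambda p+\lambda\sum_iV_i^*(|h_i|/\lambda).
\]
Here $V_i^*$ is the convex conjugate. By evenness of $V_i$ we may take absolute values of $h_i$.

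\textbf{Step 2: the one-dimensional conjugate.} For an even convex $V$ with $V(0)=0$ and right slope $\sigma$ on $(0,b)$, integrating $\sup_u(yu-V(u))$ along the slope gives
\[
V^*(y)=\int_0^b(y-\sigma(s))_+\,ds=\int_0^y m(r)\,dr,\qquad m(r)=|\{s<b:\sigma(s)<r\}|.
\]
The same layer-cake bookkeeping applied to \eqref{intrinsic} gives
\[
\Psi(r)=r\,m(r)-\int_0^r m=r\,V^{*\prime}_{-}(r)-V^*(r).
\]
The left derivative is the one matching the strict inequality in \eqref{intrinsic}. Consequently, on each open interval the left derivative of $f$ in $\lambda$ is
\[
f'(\lambda)=p+\sum_i\bigl[V_i^*(y_i)-y_iV_i^{*\prime}(y_i)\bigr]_{y_i=|h_i|/\lambda}=p-\Phi(\lambda),\qquad \Phi(\lambda)=D_\lambda^\Psi(0,h).
\]
Since $\Phi$ is nonincreasing in $\lambda$, the function $f$ is convex, which is consistent. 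Because $f$ is convex, it is absolutely continuous, so $f$ is the integral of this derivative.

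\textbf{Step 3: the boundary value at $\lambda\downarrow0$.} We have $\lambda V^*(y/\lambda)\to by$, since the recession slope of $V^*$ is $b$. Separately, by Fubini,
\[
\int_0^\infty\Psi(|h|/t)\,dt=|h|\int_0^\infty\frac{\Psi(r)}{r^2}\,dr=|h|\int_0^b\sigma(s)\int_{\sigma(s)}^\infty r^{-2}\,dr\,ds=b|h|.
\]
Summing over coordinates gives $f(0+)=\int_0^\infty\Phi(t)\,dt$.

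\textbf{Step 4: conclusion.} Let $\lambda^*=\inf\{t:\Phi(t)<p\}$. Then $f'\le0$ on $(0,\lambda^*)$ and $f'\ge0$ beyond, so $\inf f=f(\lambda^*)$. It follows that
\[
f(\lambda^*)=\int_0^\infty\Phi\,dt-\int_0^{\lambda^*}(\Phi-p)\,dt=\int_0^{\lambda^*}p\,dt+\int_{\lambda^*}^\infty\Phi\,dt=\int_0^\infty\min\{p,\Phi(t)\}\,dt,
\]
which is \eqref{areaformula}.

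\textbf{Main obstacle.} The hard part is the degenerate cases, not the algebra:
\begin{itemize}
\item $b_i=\infty$, where both sides may be infinite. I would handle this by the monotone limit of $V_i$ truncated at level $b$, since both sides increase.
\item $b_i=1$.
\item $\lambda^*=0$ or $\lambda^*=\infty$.
\item Atoms of $\sigma$-level sets, where ties at $\Phi=p$ occur.
\end{itemize}
For these I would argue with one-sided derivatives of the convex $f$ and note that the set of $t$ with $\Phi(t)=p$ does not affect the integral.
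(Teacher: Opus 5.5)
Your proposal is correct and is essentially the paper's proof. The shared ingredients are the duality $\rho_p(h)=\inf_{t>0}\{pt+t\sum_iV_i^*(|h_i|/t)\}$ (the paper derives it from a supporting line of the concave map $q\mapsto\rho_q(h)$ instead of citing Slater), the slope formula $V_i^*(r)=\int_0^{b_i}(r-\sigma_i(u))_+\,du$, the identification of $-\Psi_i(a/t)$ as the $t$-derivative of $tV_i^*(a/t)$, and splitting at the point where $t\mapsto D_t^\Psi(0,h)$ crosses $p$.

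The only real difference is that you integrate $f'(\lambda)=p-D_\lambda^\Psi(0,h)$ upward from $\lambda=0$. With the strict-inequality convention this is the right derivative in $\lambda$, not the left. Anchoring at $0$ is what forces your truncation when some $b_i=\infty$: there $f(0+)=\sum_ib_i|h_i|$ is infinite, and $f$ may even be $+\infty$ on an initial interval (e.g.\ $\Psi_i=\psi_\infty$). By contrast, both sides of \eqref{areaformula} are always finite, so your remark that ``both sides may be infinite'' is inaccurate. The paper instead anchors at $t=\infty$ via the Tonelli identity $tV_i^*(a/t)=\int_t^\infty\Psi_i(a/v)\,dv$, which holds with infinite values and never subtracts infinities, so no truncation is needed.
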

\begin{proof}
Let $f_i=V_i^*$. With $\sigma_i=V'_{i,+}$, integration of the derivative and maximization at the point where the integrand changes sign give
\[
f_i(r)=\int_0^{b_i}(r-\sigma_i(u))_+du,\qquad r\ge0.
\]
This includes flat slopes and unattained endpoints, by monotone limits. Tonelli then gives
\[
\int_t^\infty\Psi_i(a/v)dv
=\int_0^{b_i}(a-t\sigma_i(u))_+du=tf_i(a/t).
\]
Thus $H(t):=t\sum_i f_i(|h_i|/t)=\int_t^\infty D_v^\Psi(0,h)dv$.
Fenchel's inequality gives $\rho_p(h)\le pt+H(t)$.
Conversely, the finite, nondecreasing, concave function $q\mapsto\rho_q(h)$ has a supporting line at $p$ with slope $\lambda\ge0$.
If $\lambda>0$, its supporting inequality gives
$H(\lambda)=\sup_z\{\ip{h}{z}-\lambda\sum_iV_i(z_i)\}\le\rho_p(h)-\lambda p$.
If $\lambda=0$, the same inequality bounds $H(t)\le\rho_p(h)$ and one lets $t\downarrow0$.
Hence $\rho_p(h)=\inf_{t>0}\{pt+H(t)\}$.
Finally $v\mapsto D_v^\Psi(0,h)$ is nonincreasing with an integrable quadratic tail. Splitting its integral where it crosses $p$ shows that this infimum is exactly the right side of \eqref{areaformula}. The value at a crossing point does not affect the integral.
\end{proof}

\section{Dyadic slope expansion and cost comparison}\label{costtransfer}
Set $s_j=2^{j+1}$ and $a_j=2/s_j=2^{-j}$, $j\ge0$. For each coordinate define
\[
A_{ij}=\{u\in[1,b_i):s_j\le\sigma_i(u)<2s_j\},\qquad
w_{ij}=s_j|A_{ij}|.
\]
Each nonempty bin is an interval. Keep precisely the bins with $w_{ij}\ge1$, and set $\beta_{ij}=(w_{ij}+1)/2$, allowing infinity. Let $\Xi_{i,*}$ have cap one and $\Xi_{ij}$ have cap $\beta_{ij}$, with all expanded coordinates independent. Define
\begin{equation}\label{expansion}
H_i=\Xi_{i,*}+\sum_{j:w_{ij}\ge1}a_j\Xi_{ij},\quad
(Lx)_{i,*}=x_i,\quad (Lx)_{ij}=a_jx_i.
\end{equation}
The coefficients sum to at most $3$. Since $\|\Xi_\alpha\|_p\le\|Z_1\|_p$, each series converges absolutely almost surely and in every fixed $L_p$. The core makes $L$ injective.
The induced and error costs are
\begin{equation}\label{Phi}
\Phi_i(r)=\chi(r/2)+\sum_{j:w_{ij}\ge1}\psi_{\beta_{ij}}(a_jr),\qquad
E_0(r)=\sum_{j\ge0}\chi(r/s_j).
\end{equation}

\begin{proposition}\label{costprop}
For all $i,r$,
\begin{equation}\label{pointcost}
\Psi_i(r)\le2\Phi_i(r)+2E_0(r),\qquad
\Phi_i(r/2)\le\Psi_i(r)+E_0(r/2),\qquad
\int_0^\infty E_0(r)r^{-2}dr=2.
\end{equation}
Moreover $\RR_\Psi(\mu)\asymp\RR_\Phi(\mu)$ for every finite index law.
\end{proposition}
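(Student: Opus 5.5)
The argument splits into three parts: the integral identity for $E_0$, the two pointwise bounds in \eqref{pointcost}, and then the functional comparison, which follows from those bounds and Lemma~\ref{calculus}.

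\textbf{The integral of $E_0$.} Substituting $r=s_ju$ gives
\[
\int_0^\infty\chi(r/s_j)r^{-2}dr=s_j^{-1}\int_0^\infty\chi(u)u^{-2}du=2/s_j .
\]
Summing over $j\ge0$ gives $\sum_j 2^{-j}=2$.

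\textbf{The pointwise bounds.} Split $\Psi_i(r)$ according to whether the slope is below or above $2$. On $[0,1)$ we have $V_i=u^2$, so $\sigma_i(s)=2s$ and this part contributes exactly $\min(r,2)^2/4$. This lies between $\chi(r/2)$ and $\chi(r/2)$, so it is matched by the core term of $\Phi_i$.

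The remaining slopes $\sigma_i\ge2$ live on $[1,b_i)$ and fall into the bins $A_{ij}$. Bin $j$ contributes to $\Psi_i(r)$ the quantity $\int_{A_{ij}}\sigma_i\ind_{\{\sigma_i<r\}}$. This is $0$ when $r\le s_j$, lies between $w_{ij}$ and $2w_{ij}$ when $r\ge2s_j$, and lies in $[0,2w_{ij}]$ in between.

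Compare this with $\psi_{\beta_{ij}}(a_jr)$, where $a_jr=2r/s_j$:
\begin{itemize}
\item for $r>s_j$ it equals $2\beta_{ij}-1=w_{ij}$;
\item for $r\le s_j$ it equals $r^2/s_j^2\le1$.
\end{itemize}
So for a kept bin ($w_{ij}\ge1$) the bin contribution is at most $2\psi_{\beta_{ij}}(a_jr)$. For a dropped bin ($w_{ij}<1$) the contribution is at most $2\ind_{\{r>s_j\}}\le2\chi(r/s_j)$. Summing over bins gives $\Psi_i\le2\Phi_i+2E_0$.

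For the reverse bound, evaluate $\Phi_i$ at $r/2$. Then $\psi_{\beta_{ij}}(a_jr/2)=w_{ij}$ only when $r>2s_j$. In that case the whole bin lies below $r$, so $\Psi_i(r)$ already contains at least $w_{ij}$ from it. In the quadratic regime $\psi_{\beta_{ij}}(a_jr/2)\le\chi(r/(2s_j))$, which sums to at most $E_0(r/2)$. The core term satisfies $\chi(r/4)\le\min(r,2)^2/4$. Disjointness of the bins then gives $\Phi_i(r/2)\le\Psi_i(r)+E_0(r/2)$. I expect the main care to be in the boundary cases: the threshold $r\in(s_j,2s_j]$, infinite bins with $w_{ij}=\infty$, and $b_i=1$. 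The strict inequality convention in \eqref{intrinsic} is what makes the threshold cases go through.

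\textbf{The functional comparison.} By \eqref{scale}, \eqref{costsum} and monotonicity of $\RR$ in the cost, the pointwise bounds give
\[
\RR_\Psi\le C(\RR_\Phi+\RR_{E_0}),\qquad \RR_\Phi\le C(\RR_\Psi+\RR_{E_0}).
\]
It therefore suffices to show $\RR_{E_0}\le C\RR_\chi$ and $\RR_\chi\le C\min(\RR_\Psi,\RR_\Phi)$.

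The second is immediate from $\Psi_i\ge\chi(r/2)$ and $\Phi_i\ge\chi(r/2)$, together with \eqref{scale}.

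For the first, I would use \eqref{costsum} only countably with care, since summing it over $j$ loses a factor $4$ each time. Instead argue through \eqref{bernoulli}. Each term $\chi(r/s_j)$ is a rescaled Bernoulli cost, and $\sum_j a_j\eps_j$ (a dyadic sum of independent signs) has the combined cost structure of $E_0$ up to constants. Concretely, $\RR_{\chi(\cdot/s_j)}(\mu)=s_j\RR_\chi(\mu)$ by \eqref{scale}, yet the weights $s_j$ grow. So the direct bound must exploit the factor $r^{-2}$ in the integral: on the $t$-scale, $\chi(r/(s_jt))$ is simply $\chi$ at scale $s_jt$.

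A cleaner route is the subadditivity of the coupling objective at a fixed $t$. Use the single optimal coupling for $\chi$ at scale $t$ and observe that
\[
\sum_j\chi\bigl(r/(s_jt)\bigr)\le C\,\chi(r/t)\bigl(1+\log_+(r/t)\bigr).
\]
Since $\chi(r/t)$ saturates once $r>t$, I would check that the coordinatewise distortion $r$ is bounded by the support diameter and handle the logarithmic factor by integrating in $t$, which converts it into a constant. I expect this $E_0$-absorption to be the main obstacle. If the direct pointwise route fails, fall back on representing $E_0$ as the cost of the source $\sum_j a_j\eps_j$, which is comparable in convex order to a single $\eps$ times a constant, and use Theorem~\ref{capped}.
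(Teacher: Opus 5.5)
Your computation of $\int_0^\infty E_0(r)r^{-2}\,dr$ is correct. So are both pointwise bounds; your bin-by-bin comparison is a slightly more direct version of the paper's passage through $\Theta_i$. The reduction of the functional statement, via \eqref{scale}, \eqref{costsum} and monotonicity, to $\RR_{E_0}(\mu)\le C\RR_\chi(\mu)$ together with $\RR_\chi\le 2\min(\RR_\Psi,\RR_\Phi)$ is also right.

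The gap is the proof of $\RR_{E_0}\le C\RR_\chi$. First, \eqref{scale} gives $\RR_{\chi(\cdot/s_j)}(\mu)=s_j^{-1}\RR_\chi(\mu)$, not $s_j\RR_\chi(\mu)$: the cost $\chi(\cdot/s_j)$ is smaller than $\chi$. So the dyadic pieces carry summable weights, with $\sum_j s_j^{-1}=1$.

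Second, your single-coupling route does not work as described. The $\chi$-optimal coupling at scale $t$ pays only $1$ for a saturated move of any length, while $E_0$ charges about $\log_2(\Delta_i/t)$ for it. Bounding $\Delta_i$ by the diameter leaves $\int_0^\infty \RD^\chi_\mu(t)\log_+(\operatorname{diam}/t)\,dt$. When most of $\int_0^\infty\RD^\chi_\mu$ comes from scales $t\le\epsilon\ll\operatorname{diam}$, this exceeds $\RR_\chi(\mu)$ by a factor of order $\log(\operatorname{diam}/\epsilon)$. Integrating in $t$ does not absorb the logarithm, and your sketch supplies no mechanism that does.

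The missing idea is to use a \emph{different} coupling for each dyadic piece of $E_0$. Your fallback is the paper's route and does this implicitly, but it has to be carried out. With the injective map $L_Jx=(x_i/s_j)_{i,\,j\le J}$, the pulled-back Bernoulli cost is exactly $E_0^{(J)}$. Hence, by \eqref{bernoulli} and the pairing identity \eqref{pullpair},
$\RR_{E_0^{(J)}}(\mu)=\RR_\chi((L_J)_\#\mu)\le C B_\eps((L_J)_\#\mu)=C B_{W^{(J)}}(\mu)$.
Here $W^{(J)}_i=\sum_{j\le J}\eps_{ij}/s_j$ is centered with values in $[-1,1]$, so $W^{(J)}\cx\eps$. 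Then \eqref{cxpair} and \eqref{bernoulli} again give $\RR_{E_0^{(J)}}(\mu)\le C B_\eps(\mu)\le C'\RR_\chi(\mu)$. Finally let $J\to\infty$ using the finite-label monotone limit of Section~\ref{prelim}.

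There is also an elementary route that avoids Theorem~\ref{capped}. At scale $t$, for each $j\le J$ take an observation $U_j$ from an optimal $\chi$-self-coupling at scale $s_jt/2$, with the $U_j$ conditionally independent given $X$. Posterior-resample $X'$ given $(U_0,\dots,U_J)$. Applying \eqref{posterior}, and \eqref{triangle} to each piece $D^\chi_{s_jt}$, gives $\RD^{E_0^{(J)}}_\mu(t)\le 2\sum_{j\le J}\RD^\chi_\mu(s_jt/2)$. Integrating yields $\RR_{E_0^{(J)}}(\mu)\le 2\sum_j (2/s_j)\RR_\chi(\mu)=4\RR_\chi(\mu)$. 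Either argument closes the proof; as written, yours does not.
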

\begin{proof}
Put $\Theta_i(r)=\chi(r/2)+\sum_jw_{ij}\ind_{\{r>s_j\}}$.
On a bin, the slope lies between $s_j$ and $2s_j$, so
$\Theta_i(r/2)\le\Psi_i(r)\le2\Theta_i(r)$.
For a kept bin,
\[
\psi_{\beta_{ij}}(a_jr)=w_{ij}\ind_{\{r>s_j\}}+(r/s_j)^2\ind_{\{r\le s_j\}}.
\]
Discarded steps have weights less than one, and the added quadratic terms are also bounded by the corresponding summands in $E_0$.
Writing $\Theta_i=\Phi_i-Q_i+A_i$ with $0\le Q_i,A_i\le E_0$ proves the first two inequalities, also at thresholds. Tonelli and
$\int_0^\infty\chi(r/s)r^{-2}dr=2/s$ give the last identity.

To control the error for a prescribed law, truncate $E_0$ at $j=J$ and pull back the Bernoulli cost through $x\mapsto(x_i/s_j)_{i,j\le J}$. Its source in original coordinates is $W_i^{(J)}=\sum_{j=0}^J\eps_{ij}/s_j$. This is centered and lies in $[-1,1]$, so $W_i^{(J)}\cx\eps_i$: a convex function lies below the chord joining its values at $-1,1$. Independence and \eqref{cxpair}, together with \eqref{bernoulli}, imply
$\RR_{E_0^{(J)}}(\mu)\le C B_\eps(\mu)$.
The pullback pairing identity used here follows by lifting a coupling through the conditional law of the expanded source given its linear image, exactly as in Section~\ref{finish} below. Increasing $J$ and using the finite-label limit in Section~\ref{prelim} gives
$\RR_{E_0}(\mu)\le C B_\eps(\mu)$.
Both $\Psi_i,\Phi_i$ dominate $\chi(r/2)$, so their functionals are at least $cB_\eps(\mu)$.
Now apply \eqref{pointcost}, scaling, and the sum rule \eqref{costsum} to absorb the error in both directions.
\end{proof}

\section{Comparison of the expanded random variables}\label{variables}
We give the remaining source comparison, following \cite{HWW}, with details so that it is not a black-box transfer.

\begin{lemma}[Moments imply convex order]\label{momentcx}
Let $Y$ be a normalized symmetric log-concave-tailed scalar, with quantile $q(1)=1$. Suppose $A$ is symmetric and, for fixed positive constants $c,C,c_0$,
\[
cq(p)\le\|A\|_p\le Cq(p)\quad(p\ge2),\qquad \E|A|\ge c_0.
\]
Then $C_1^{-1}Y\cx A\cx C_1Y$, where $C_1$ depends only on these constants.
\end{lemma}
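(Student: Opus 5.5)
The plan is to reduce convex order between symmetric scalars to a comparison of stop-loss transforms of the magnitudes. For symmetric $U,W$, every convex $\phi$ can be sign-averaged to an even convex function, as in Section~\ref{prelim}. An even convex function is a constant plus a nonnegative mixture of $(|x|-t)_+$, $t\ge0$. Hence $U\cx W$ holds as soon as
\[
\E(|U|-t)_+\le\E(|W|-t)_+\qquad\text{for all }t\ge0 .
\]
In particular, stochastic domination of magnitudes on $[t_0,\infty)$ gives this inequality for $t\ge t_0$, by $\E(X-t)_+=\int_t^\infty\PP(X>u)\,du$. The small-$t$ range will be handled separately by first-moment bounds. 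Throughout I use \eqref{quantile}: $|Y|\overset d=q(E)$, $q(ap)\le aq(p)$ for $a\ge1$, and $q(1)=1$.

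\emph{Upper bound $A\cx C_1Y$.} By Markov at order $p\ge2$,
\[
\PP\bigl(|A|\ge eCq(p)\bigr)\le e^{-p}\le\PP\bigl(|Y|\ge q(p)\bigr).
\]
The last inequality holds because $q(E)\ge q(p)$ whenever $E\ge p$. Setting $u=eCq(p)$, this gives $\PP(|A|\ge u)\le\PP(eC|Y|\ge u)$ for all $u\ge eCq(2)$, and $eCq(2)\le 2eC=:t_0$. Any $C_1\ge eC$ then covers $t\ge t_0$.

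For $t<t_0$, I bound $\E(|A|-t)_+\le\E|A|\le\|A\|_2\le Cq(2)\le2C$. On the other side,
\[
\E(C_1|Y|-t)_+\ge\PP(|Y|\ge1)\,(C_1-t)\ge e^{-1}(C_1-t_0).
\]
Choosing $C_1$ large in terms of $C$ makes the right side exceed $2C$. Points where $q$ is flat or takes extended values only enter through $\PP(|Y|\ge q(p))\ge e^{-p}$, which is immediate from the representation.

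\emph{Lower bound $C_1^{-1}Y\cx A$.} Paley--Zygmund applied to $|A|^p$, combined with $\|A\|_{2p}\le Cq(2p)\le2Cq(p)$ and $\|A\|_p\ge cq(p)$, gives
\[
\PP\bigl(|A|\ge\tfrac c2q(p)\bigr)\ge\tfrac14\bigl(\tfrac c{2C}\bigr)^{2p}\ge e^{-\kappa p},\qquad p\ge2,
\]
with $\kappa$ depending only on $c,C$. Since $\PP(|Y|>q(\kappa p))\le e^{-\kappa p}$ and $q(\kappa p)\le\kappa q(p)$, I get
\[
\PP\bigl(|A|\ge\tfrac{c}{2\kappa}q(\kappa p)\bigr)\ge\PP\bigl(|Y|>q(\kappa p)\bigr).
\]
Letting $p$ range over $[2,\infty)$, this yields $\PP(|A|\ge u)\ge\PP(C_1^{-1}|Y|>u)$ for $u\ge C_1^{-1}q(2\kappa)$, where $C_1\ge2\kappa/c$. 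Levels $u$ beyond $C_1^{-1}b$ are trivial. The threshold satisfies $t_0=C_1^{-1}q(2\kappa)\le2\kappa/C_1$, so it can be made small by enlarging $C_1$.

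For $t<t_0$, I use
\[
\E(|A|-t)_+\ge c_0-t_0\qquad\text{and}\qquad \E(C_1^{-1}|Y|-t)_+\le C_1^{-1}\E|Y|\le C_1^{-1}\E q(E)\le C_1^{-1}.
\]
The last step uses $q(p)\le\max(1,p)$. Taking $C_1$ large enough that $2\kappa/C_1\le c_0/2$ and $C_1^{-1}\le c_0/2$ closes the comparison.

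The main obstacle is this lower direction. One must check that the Paley--Zygmund level $\tfrac c2q(p)$, the quantile rescaling $q(\kappa p)\le\kappa q(p)$, and the fact that $p$ is restricted to $p\ge2$ fit together into a domination valid on a whole half-line $[t_0,\infty)$, with $t_0$ below the scale $c_0$ where the first-moment bound takes over. Care is also needed at atoms and at the endpoint $b$ of the tail, where strict and non-strict inequalities in $\PP(|Y|\ge q(p))$ must be matched correctly.
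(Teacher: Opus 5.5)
Your proof is correct, and it assembles the convex order differently from the paper. Both arguments rest on the same two estimates. The upper tail of $|A|$ comes from Markov at order $p$. The lower tail comes from Paley--Zygmund for $|A|^p$ at level $2^{-p}\E|A|^p$, using $\|A\|_{2p}\le 2Cq(p)$, and is then converted to $Y$'s scale by $q(\kappa p)\le\kappa q(p)$. The paper turns these into stochastic dominations that hold at \emph{every} level but carry an additive constant, $|A|\st M(1+|Y|)$ and $|Y|\st M(1+|A|)$, and it handles small levels directly. It then removes the constant by convexity, splitting $M\eps(1+|B|)=\tfrac12(2M\eps)+\tfrac12(2M\eps|B|)$ and using $b_0\eps\cx B$. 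The first-moment inputs $\E|A|\ge c_0$ and $\E|Y|\ge e^{-1}$ enter only at that removal step. You instead use the stop-loss criterion for symmetric variables, so you need tail domination only on a half-line $[t_0,\infty)$. The bounded range $t<t_0$ you settle with crude first-moment comparisons: $\E(|A|-t)_+\le\|A\|_2$ against $\PP(|Y|\ge1)\ge e^{-1}$ in one direction, and $\E|A|-t\ge c_0-t_0$ against $C_1^{-1}\E|Y|$ in the other. The price is tracking $t_0\le 2\kappa/C_1$ below $c_0/2$. Your route is more direct and never introduces the additive constant. The paper's is more modular: its constant-removal step is generic and needs only a first-moment lower bound on the dominating variable.

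Two small points. First, $\E q(E)\le1$ does not follow from $q(p)\le\max(1,p)$, which only gives $\E\max(1,E)=1+e^{-1}$. It does follow from Jensen, since $q$ is concave and $\E E=1$; either way only the constant changes. Second, in the upper direction, your intermediate-value argument covers only levels $u<eCb$ when $b<\infty$. For larger $u$ you should note that $\PP(|A|\ge eCb)\le\inf_{p\ge2}e^{-p}=0$, which follows from the same Markov bound.
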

\begin{proof}
Markov gives $\PP(|A|>eCq(p))\le e^{-p}$.
Since $q(2p)\le2q(p)$, Paley--Zygmund applied to $|A|^p$ at level $2^{-p}\E|A|^p$ gives
\[
\PP(|A|>\delta q(p))\ge e^{-\kappa p},\qquad p\ge2,
\]
for $\delta=c/4$ and a constant $\kappa\ge1$. The strict inequality is valid because $\|A\|_p/2\ge cq(p)/2>\delta q(p)$.
We show
\begin{equation}\label{stadd}
|A|\st M(1+|Y|),\qquad |Y|\st M(1+|A|)
\end{equation}
for a large fixed $M$.
For the first, at a point $y<b$ with $N(y)\ge2$, use the upper tail estimate at $p=N(y)$ and $q(N(y))=y$. At $N(y)<2$, use $q(2)\le2$ and the estimate $e^{-2}\le\PP(|Y|>y)$. At $y\ge b<\infty$, the moment upper bound gives $\|A\|_\infty\le Cb$, so the left tail vanishes for large $M$.
For the second, if $r<\delta q(2)$ choose $M\ge2\kappa$; then
$\PP(|Y|>M(1+r))\le e^{-2\kappa}\le\PP(|A|>r)$.
If $\delta q(2)\le r<\delta b$, continuity gives $r=\delta q(p)$ for some $p\ge2$.
Choose $M\ge\kappa/\delta$. Since $q(\kappa p)\le\kappa q(p)$,
$\PP(|Y|>M(1+r))\le e^{-\kappa p}\le\PP(|A|>r)$.
If $r\ge\delta b$ with finite $b$, choose $M\ge1/\delta$ and the left tail is zero. Endpoint atoms cause no problem because these comparisons use strict tails.

To remove the added one, if a symmetric $B$ satisfies $\E|B|\ge b_0>0$, then $b_0\eps\cx B$ and for any convex $f$,
\[
\E f(M\eps(1+|B|))
\le\tfrac12\E f(2M\eps)+\tfrac12\E f(2MB)
\le\E f(C_*B),\quad C_*=2M\max(1,b_0^{-1}).
\]
Here the sign is independent, and scaling a centered variable up preserves convex order. Apply this to $B=Y$ and $B=A$ in \eqref{stadd}, noting $\E|Y|\ge e^{-1}$ by \eqref{quantile}.
\end{proof}

\begin{proposition}\label{sourceprop}
For the independent coordinates of \eqref{expansion},
$C^{-1}Y_i\cx H_i\cx CY_i$ universally. Hence $B_H(\mu)\asymp B_Y(\mu)$.
\end{proposition}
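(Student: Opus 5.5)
The plan is to apply Lemma~\ref{momentcx} with $A=H_i$. So the whole task is to verify three things: $H_i$ is symmetric, $\E|H_i|\ge c_0$, and $\|H_i\|_p\asymp q_i(p)$ for $p\ge2$ with universal constants.

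Symmetry is immediate, since $H_i$ is a series of independent symmetric terms. The lower bound on the first moment comes from conditional Jensen: condition on the summands other than $\Xi_{i,*}$ to get $\E|H_i|\ge\E|\Xi_{i,*}|=1-e^{-1}$.

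Once the two-sided moment comparison is in hand, Lemma~\ref{momentcx} gives $C^{-1}Y_i\cx H_i\cx CY_i$. Independence across coordinates then upgrades this to vector convex order by successive conditional expectations. The scale factor $C$ is harmless because pairings are linear in scaling. Finally, \eqref{cxpair} applied in both directions yields $B_H(\mu)\asymp B_Y(\mu)$. Here $B_H$ means the pairing of $\mu$ with the source $H=(H_i)_i$, which has one scalar per original coordinate. If the paper intends the pairing of $L_\#\mu$ with the expanded source, it coincides with this one because $\ip{Lx}{\Xi}=\ip{x}{H}$.

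For the moments, I would compute both sides through the potential. By \eqref{quantile} and log-concavity, $q_i(p)\asymp\|Y_i\|_p$ for $p\ge2$. Concretely, the lower bound $\|Y_i\|_p\ge q_i(p)\PP(E\ge p)^{1/p}\ge q_i(p)/e$. For the upper bound, sublinearity $q_i(ap)\le aq_i(p)$ gives $\E q_i(E)^p\le q_i(p)^p\E\max(1,E/p)^p\le (Cq_i(p))^p$.

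For $H_i$, each term satisfies $\|a_j\Xi_{ij}\|_p\asymp a_j\min(p,\beta_{ij})$ for $p\ge1$. The main part of $H_i$ is a sum of independent symmetric variables, so I would use Lata{\l}a-type control, or more simply the bounds \eqref{expmoment} adapted to caps:
\[
\|H_i\|_p\asymp\sqrt p+\sum_j a_j\min(p,\beta_{ij}).
\]
The lower bound takes the largest single term together with the Gaussian part of the core. The upper bound uses the triangle inequality over the dyadic terms; this suffices because the coefficients $a_j$ are geometric. The core contributes $\min(p,1)\asymp 1\le\sqrt p$.

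It then remains to identify $\sqrt p+\sum_j 2^{-j}\min(p,(w_{ij}+1)/2)$ with $q_i(p)$ up to constants. This is the \textbf{main obstacle}. I would do it by evaluating $q_i(p)$ as the constrained supremum $\rho_p$ in one coordinate. Since $V_i\asymp 2N_i$ beyond $1$, the quantity $\sup\{z: V_i(z)\le 2p\}$ is comparable to $q_i(p)$ for $p\ge1$. Lemma~\ref{areaid} with $d=1$, $h=1$ then gives
\[
q_i(p)\asymp\int_0^\infty\min\{p,\Psi_i(1/t)\}\,dt=\int_0^\infty\min\{p,\Psi_i(r)\}r^{-2}\,dr.
\]
Next, Proposition~\ref{costprop} replaces $\Psi_i$ by $\Phi_i$ up to $E_0$ and factor-two rescalings of $r$. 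Integrating $\min\{p,\cdot\}$ termwise, with $\min(p,\sum)\le\sum\min(p,\cdot)$ in one direction and comparability to the maximum in the other, gives the following pieces. The core gives $\int\min(p,\chi(r/2))r^{-2}dr\asymp 1$. The term $\psi_{\beta}(a_jr)$ gives $\asymp a_j\min(\sqrt p,\ldots)$; here I must check carefully that its quadratic regime produces $a_j\sqrt{p}$-type contributions summing to $\lesssim\sqrt p$ and that its step produces $a_j\min(p,\beta_{ij})$. The $E_0$ error contributes $\lesssim\sqrt p$ by its own integral over the quadratic pieces. The delicate point is the lower direction: when $\min$ is taken of a sum, I need the sum to be dominated by its largest dyadic step up to a constant. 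I would handle this by noting that $\int\min(p,\sum_j\cdot)$ is bounded below by $\max_j\int\min(p,\cdot)$ and by the core's $\sqrt p$ contribution. I would then accept an extra sum-versus-max loss, controlled via the geometric weights $a_j$ and the fact that the steps turn on at increasing thresholds $s_j$. Thresholds at bins where $\sigma_i$ jumps, and the case $b_i<\infty$ with an infinite cap, are handled by the strict-inequality conventions and by truncating caps and passing to monotone limits.
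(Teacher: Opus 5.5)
There is a genuine gap, and it is in the moments of $H_i$. The rest matches the paper: the reduction to Lemma~\ref{momentcx}, the bound $\E|H_i|\ge1-e^{-1}$, the estimate $\|Y_i\|_p\asymp q_i(p)$, the area formula for $q_i(p)$, and the passage to $B_H(\mu)\asymp B_Y(\mu)$.

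\textbf{The $\sqrt p$ term is wrong.} Your formula $\|H_i\|_p\asymp\sqrt p+\sum_ja_j\min(p,\beta_{ij})$ is false, because the core $\Xi_{i,*}=\eps\min(E,1)$ is bounded by one and has no Gaussian part. The estimate \eqref{expmoment} is about uncapped exponentials. Here the range $|u_j|\le1$ of the support function contributes at most $1+\sum_ja_j\le3$. For $Y_i=\eps_i$ no bin is kept, so $H_i=\Xi_{i,*}$ and $\|H_i\|_p\le1=q_i(p)$. More generally $q_i(p)=o(\sqrt p)$ for every bounded or faster-than-Gaussian tail, so the identification you call the main obstacle cannot hold as stated. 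Your own profile computation already shows the mismatch: termwise, the core gives $\int_0^\infty\min\{p,\chi(r/2)\}r^{-2}dr=1$, and each kept bin gives exactly $\tfrac12a_j(1+\min\{p,2\beta_{ij}-1\})$. There is no $\sqrt p$ anywhere.

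\textbf{What survives without $\sqrt p$.} Your upper bound is then sound. The triangle inequality gives $\|H_i\|_p\le1+\sum_ja_j\min(p,\beta_{ij})$. On the disjoint intervals $(s_k,s_{k+1}]$, bin $k$ already sits at its step value $2\beta_{ik}-1$, and each such interval has $\int r^{-2}dr=a_k/4$. Hence $\int_0^\infty\min\{p,\Phi_i(r)\}r^{-2}dr\ge\frac14\sum_ka_k\min\{p,2\beta_{ik}-1\}$. So the profile-side lower bound you flag as delicate is elementary. This route is a legitimate substitute for the paper's Chernoff argument with \eqref{mgf}.

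\textbf{The missing step is the reverse moment bound.} You need $\|H_i\|_p\gtrsim1+\sum_ja_j\min(p,\beta_{ij})$, and the largest single term does not give it. Geometric decay of $a_j$ does not make the sum comparable to its maximum, because $\min(p,\beta_{ij})$ can grow at the same geometric rate. For example, take $\sigma_i=2^{j+1}$ on $[1+j\gamma,1+(j+1)\gamma)$ with $\gamma\ge1$ large.
\begin{itemize}
\item Then $\beta_{ij}\approx\gamma2^j$.
\item At $p=\gamma^2$, each of the roughly $\log_2\gamma$ bins with $2^j\le\gamma$ contributes about $\gamma$.
\item So the sum and $q_i(p)$ are both of order $\gamma\log\gamma$.
\item Every single summand is $O(\gamma)$, and so is your spurious $\sqrt p=\gamma$.
\end{itemize}

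What is missing is a lower bound in which many bins are large at the same time. The paper does this in three steps. It identifies the profile with the support function \eqref{support} of the capped potentials. It takes a nonnegative allocation $(u_j)$ with $\sum_jV_{\beta_j}(u_j)\le p$. It then forces all the events $\{\xi_j^{\beta_j}\ge u_j\}$ simultaneously, which costs probability only $e^{-Cp}$ and yields $\|H_i\|_p\gtrsim\sum_jc_ju_j$ up to the bounded part. A Gluskin--Kwapie\'n type two-sided formula would also work. An argument that uses one summand at a time cannot.
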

\begin{proof}
Fix one coordinate and omit its index. Define
\[
Q(p)=\sup\{u\ge0:V(u)\le p\},\qquad
Q_\Phi(p)=\int_0^\infty\min\{p,\Phi(1/t)\}\,dt.
\]
The area identity and \eqref{pointcost} give
\begin{equation}\label{profiles}
Q(p)\le2Q_\Phi(p)+4,\qquad Q_\Phi(p)\le2Q(p)+2.
\end{equation}
For example $\int E_0(1/t)dt=2$; in the other direction substitute $r=1/t$ into the second inequality of \eqref{pointcost} and rescale $t$.
Both profiles are at least one when $p\ge1$, so they are comparable there.
If $b>1$, $Q(p)=q((p+1)/2)\asymp q(p)$ for $p\ge1$; if $b=1$, both equal one. The representation \eqref{quantile} also gives
\begin{equation}\label{Ymom}
e^{-1}q(p)\le\|Y\|_p\le
q(p)\bigl(1+\Gamma(p+1)/p^p\bigr)^{1/p}\le Cq(p).
\end{equation}
For the lower bound restrict to $E\ge p$; for the upper bound use $q(E)\le q(p)\max(1,E/p)$.

We prove $\|H\|_p\asymp Q_\Phi(p)$ first for finite expansions. Write
$H=\sum_jc_j\xi_j^{\beta_j}$, including the independent core with coefficient one and cap one; $c_j\ge0$ and $\sum_jc_j\le3$.
Let
\[
V_\beta(u)=\begin{cases}u^2,&|u|\le1,\\2|u|-1,&1<|u|\le\beta,\\+\infty,&|u|>\beta.\end{cases}
\]
Its slope cost is $\psi_\beta$. Lemma~\ref{areaid} identifies
\begin{equation}\label{support}
Q_\Phi(p)=\sup\left\{\sum_jc_ju_j:\sum_j V_{\beta_j}(u_j)\le p\right\}.
\end{equation}
For $f_\beta=V_\beta^*$ and $z\ge0$,
\begin{equation}\label{mgf}
\log\E e^{z\xi^\beta}\le f_\beta(8z).
\end{equation}
Indeed for $z\le1/4$ the left side is at most $2z^2$, while the right side equals $16z^2$. For $1/4\le z\le1/2$ the left side is at most $\log(4/3)$ and the right side is at least one. For $z\ge1/2$ and finite $\beta$, use
$\log\E e^{z\xi^\beta}\le\beta z\le8\beta z-2\beta+1=f_\beta(8z)$; for infinite $\beta$ the right side is infinite.

Put $J(v)=\inf\{\sum_jV_{\beta_j}(u_j):\sum_jc_ju_j=v\}$.
Its sublevels are compact linear images, so $J$ is closed; mixing feasible points makes it convex. It is even, with $J(0)=0$, and
$J^*(z)=\sum_jf_{\beta_j}(c_jz)$.
Independence, \eqref{mgf}, Chernoff's bound and biconjugacy give
\[
\PP(H>v)\le\inf_{z\ge0}\exp(-zv+J^*(8z))=e^{-J(v/8)}.
\]
Formula \eqref{support} implies $J(2Q_\Phi(p))\ge p$ (or this point lies outside its domain). By symmetry,
$\PP(|H|>16Q_\Phi(p))\le2e^{-p}$.
The support profile is concave and vanishes at zero, so
$Q_\Phi(vp)\le vQ_\Phi(p)$ for $v\ge1$. Integrating the bound at budget $vp$ yields
$\|H\|_p\le C Q_\Phi(p)$ for $p\ge2$.

For the lower bound take nonnegative feasible $u_j$ in \eqref{support}. Coordinates with $u_j\le1$ contribute at most three. On $I=\{j:u_j>1\}$, the budget gives $|I|\le p$ and $\sum_{j\in I}u_j\le p$. The events
$\xi_j^{\beta_j}\ge u_j$ have probabilities $\tfrac12e^{-u_j}$, including equality at a finite cap. Force all these events and require the independent symmetric remainder to be nonnegative. The intersection has probability at least
$\tfrac12e^{-(1+\log2)p}$, so
$\|H\|_p\ge c\sum_{j\in I}c_ju_j$.
The core also gives $\E|H|\ge\E|\xi^1|=1-e^{-1}$ by conditional Jensen. Combining the two bounds absorbs the contribution at most three, and taking the supremum proves $Q_\Phi(p)\le C\|H\|_p$.

For the full expansion, truncate at bin $J$. The omitted coefficients have sum tending to zero, so the sources converge in every fixed $L_p$. The partial costs increase, and monotone convergence in the defining integral gives convergence of their profiles. Thus $\|H\|_p\asymp Q_\Phi(p)\asymp q(p)$ for $p\ge2$, with the same positive first-moment lower bound. Lemma~\ref{momentcx} gives the scalar convex orders. Their independent product and \eqref{cxpair} give the asserted pairing comparison.
\end{proof}

\section{Completion of the general theorem}\label{finish}
For a finite expansion, let $H=L^{\mathsf T}\Xi$. The pairing identity is
\begin{equation}\label{pullpair}
B_\Xi(L_\#\mu)=B_H(\mu).
\end{equation}
One direction follows by setting $X=L^{-1}U$ for $U\sim L_\#\mu$ and using
$\ip{U}{\Xi}=\ip{X}{L^{\mathsf T}\Xi}$.
For the other, take any coupling of $X,H$ and draw $\Xi$ from its conditional law given $L^{\mathsf T}\Xi=H$. This retains the prescribed law of $\Xi$ and exactly the same pairing.
The induced distortion is precisely $\Phi$, so the injective pullback rule and Theorem~\ref{capped} yield
\[
\RR_{\Phi^{(J)}}(\mu)=\RR_{\rm cap}((L_J)_\#\mu)
\asymp B_{\Xi^{(J)}}((L_J)_\#\mu)=B_{H^{(J)}}(\mu)
\]
for every finite truncation. The sources converge in $L_1$ and the cost matrices increase on the same finite label space. The limit facts in Section~\ref{prelim} therefore give
$\RR_\Phi(\mu)\asymp B_H(\mu)$ with the same constants.
Propositions~\ref{costprop} and \ref{sourceprop} now prove
\[
\boxed{\RR_\Psi(\mu)\asymp\RR_\Phi(\mu)\asymp B_H(\mu)\asymp B_Y(\mu).}
\]
This proves Theorem~\ref{main}; \eqref{supmu} gives the expected-supremum formulation.

\section{Special cases and scope}
For symmetric signs, $b_i=1$ and $\Psi_i(r)=\min(r^2/4,1)$.
For normalized symmetric exponentials, $N_i(u)=u$ and the slope is $2u$ below one and $2$ above one; thus $\Psi_i=\psi_\infty$.
For tails $N_i(u)=u^2$, the potential is quadratic up to universal factors, and so is its intrinsic cost, recovering the Gaussian-type geometry. Bounded coordinates and tails with arbitrarily rapidly increasing slopes are covered without a doubling constant.

The theorem proved here concerns independent coordinates. It is a prescribed-law majorizing-measure comparison, not a claim about arbitrary negatively associated log-concave vectors. The weighted-Orlicz SMP argument from the discussion is a separate result and is not used in this proof. Nor does the present manuscript derive the traditional $\ell_1+\gamma_2$ decomposition from the Bernoulli prescribed-law formula; that would require the corresponding geometric conversion.

\paragraph{Attribution and dependence.}
The capped variational route and its replication are taken from the supplied invariant draft. The clipped-code construction, slope-area identity, dyadic cost comparison and source comparison originate in \cite{HWW}; their proofs are included here to close the gap left explicit in the supplied generalization note. The only non-elementary probabilistic inputs retained are the standard exponential chaining theorem, Pr\'ekopa's theorem and the martingale characterization of convex order. In particular, the proof does not use the general uniform truncation theorem from \cite{HWW}, and the general-tail conclusion is not a new theorem relative to that work.

\end{document}